\newtheorem{Theorem}{Theorem}[section]
\newtheorem{Lemma}{Lemma}[section]
\newtheorem{Proposition}{Proposition}[section]
\numberwithin{equation}{section}
\def\bc{\begin{center}}
\def\ec{\end{center}}
\def\a1{(a_1, a_2, \cdots, a_n)}
\begin{document}

\title[Asymptotic pairs, stable sets and chaos]
{Asymptotic pairs, stable sets and chaos in positive entropy
systems}

\author[W. Huang]{Wen Huang}
\address{W. Huang: Wu Wen-Tsun Key Laboratory of Mathematics, USTC, Chinese Academy of Sciences, Hefei Anhui 230026, PRC}
\email{wenh@mail.ustc.edu.cn}
\author[L. Xu]{Leiye Xu}
\address{L. Xu: School of Mathematics, Jilin University,
Changchun, 130012, PRC} \email{xuly12@mails.jlu.edu.cn}

\author[Y. Yi]{Yingfei Yi}
\address{Y. Yi: School of Mathematics, Jilin University,
Changchun, 130012, PRC, and School of Mathematics, Georgia Institute
of Technology, Atlanta, GA 30332, USA} \email{yi@math.gatech.edu}

\thanks {The first author was partially supported by NNSF for Distinguished
Young Scholar (11225105). The third author was partially supported
by  NSF grant DMS1109201 and a Scholarship from Jilin
University.}

\subjclass[2010]{Primary  37B05; Secondary 37A35}

\keywords{Asymptotic pair, stable set, chaos, entropy}

\begin{abstract}  We consider positive entropy
$G$-systems for certain
 countable, discrete, infinite left-orderable amenable groups $G$.
 By undertaking  local analysis, the existence  of asymptotic pairs and chaotic sets
 will be studied in connecting with the  stable sets. Examples are
 given for the case of integer lattice groups, the Heisenberg group,
 and the groups of integral unipotent upper triangular
matrices.
\end{abstract}
\maketitle

\section{Introduction}
Throughout the paper,  we let  $G$ be a countable,
discrete, infinite amenable group.  Recall that a countable discrete
group $G$ is said to be {\it amenable} if there exists an invariant
mean on it,  or equivalently,  if there exists a
sequence of finite subsets $F_n\subset   G$, such that,  for every
$g \in G$,
\begin{equation}\label{cond-1}
\lim_{n\rightarrow +\infty} \frac{|gF_n\Delta F_n|}{|F_n|}=0.
\end{equation}
A sequence satisfying condition \eqref{cond-1} is called a {\it
F{\o}lner sequence} (see \cite{EF}).  For instance,
when $G=\mathbb{Z}^d$ for  some $d\in \mathbb{N}$,
$\{F_n=[0,n]^d: n\in \mathbb N\}$ is a F{\o}lner sequence of $G$.

A {\it $G$-system} $(X,G)$  is such that $X$ is a
compact metric space and $G$ continuously acts on $X$. Let $(X,G)$
be a $G$-system and $S$ be an infinite subset of $G$, where $X$ is
endowed  with  the metric $d$. A pair $(x,y)\in X\times
X$ is called  a {\it $S$-asymptotic pair} if for each
$\epsilon>0$, there are only  finitely many elements
$s\in S$ with $d(sx,sy)>\epsilon$. For a point $x\in
X$, the set
$$W_S(x,G)=\{y\in X: (x,y) \text{ is an $S$-asymptotic pair}\}$$
 is called the {\it $S$-stable set} of $x$.  Let
$\delta>0$. A pair $(x,y)\in X\times X$ is  called a
{\it $(S,\delta)$-Li-Yorke pair} if $(x,y)$ is $S'$-asymptotic pair
for some infinite subset $S'$ of $S$ and $\{s\in
S:d(sx,sy)>\delta\}$ is an infinite subset of $S$. A subset $E$ of
$X$ is said to be {\it $(S,\delta)$-chaotic} if
$(x,y)$ is  a $(S,\delta)$-Li-Yorke pair for every
$x\neq y\in E$.

Given a $G$-system $(X,G)$, one  can define in the usual
way its topological entropy $h_{\text{top}}(G,X)$ as well as
measure-theoretic entropy  with respect to an invariant Borel
probability measure, lying in $[0,+\infty]$ (see Section 2 for
detail).  One fundamental question to ask is the
relationship between  the positivity of  the entropies
of $(X,G)$ and the chaotic behavior of  its orbits. A
well-known result in this direction is  given by
Blanchard  and co-authors in \cite{BGKM}
for the case $G=\mathbb{Z}$.  They showed  via
ergodic-theoretic method that if a $\mathbb{Z}$-system $(X,\mathbb{Z})$ has positive topological entropy, then there
exists an uncountable $(\mathbb{Z}_+,\delta)$-chaotic subset $E$ of
$X$ for some $\delta>0$, where $\mathbb{Z}_+=\{0,1,2,\cdots\}$
 (see also \cite{KL} for an alternative proof using
combinatorial method). The result  in \cite{BGKM} is
later generalized to the case of  sofic group actions by Kerr and Li
(\cite[Corollary 8.4]{KL1}).

 Another fundamental question to ask is the relationship
between the positivity of the  entropy of a $G$-system $(X,G)$
and the existence of asymptotic pairs.  On one hand,
Lind and Schmidt (\cite[Example 3.4]{LS}) constructed examples of
$\mathbb{Z}$-actions generated by toral automorphisms which have
positive entropy but admit no non-diagonal $\mathbb{Z}$-asymptotic
pairs. In fact, by using co-induction techniques introduced in
\cite{DGRS,DZ}, such examples can be constructed for general
$G$-actions if $G$ is an infinite countable discrete amenable group
containing a subgroup isomorphic to $\mathbb{Z}$. Thus,
additional conditions are needed for a positive entropy $G$-system
to admit non-diagonal $G$-asymptotic pairs. Indeed, it was shown by
Schmidt (see \cite[Proposition 2.1]{Sc}) for the case $G=
\mathbb{Z}^d$ for some $d \in\mathbb{N}$ that every subshift of
finite type with positive entropy has non-diagonal $G$-asymptotic
pairs. For an expansive action of $\mathbb{Z}^d$ by (continuous)
automorphisms of a compact abelian group, Lind and Schmidt \cite{LS}
proved that the action has positive entropy if and only if there
exist non-diagonal $\mathbb{Z}^d$-asymptotic pairs. Recently, Chung
and Li \cite{CL} extended this result in \cite{LS} for a larger
class of amenable groups.

 On the other hand, it was shown by Blanchard et al in
\cite{BHR}  for the case $G = \mathbb{Z}$  that if the
$\mathbb{Z}$-system is generated by a homeomorphism $T :
X\rightarrow  X$  and if $\mu$ is a $T$-invariant ergodic Borel
probability measure on $X$ with positive entropy, then there exists
$\delta>0$ such that for $\mu$-a.e. $x\in X$, there exists an
uncountable subset $F_x\subseteq W_{\mathbb{Z}_+}(x,\mathbb{Z})$
 with the property that $(x,y)$ is a
$(\mathbb{Z}_-,\delta)$-Li-Yorke pair for each $y\in F_x$, where
$\mathbb{Z}_-=\{ 0,-1,-2,\cdots \}$.  We refer the
readers to \cite{Z} for relativized versions of the results  in
\cite{BHR, BGKM}, to \cite{H,HLY,S} for  more precise
characterizations on chaotic phenomenon appearing in  stable sets
of positive entropy $\mathbb{Z}$-systems,  and to
\cite{FHYZ} for  dimension analysis of  these sets.

\medskip

 With the fundamental questions above in mind, the aim
of the present paper is to investigate the connections among the
positivity of the entropies, chaotical behavior, and the existence
of asymptotical pairs in a general $G$-system when $G$ is a
countable discrete infinite amenable group with the algebraic past
$\Phi$. More precisely, let $G$ be a group with the unit $e_G$. $G$
is said to be {\it left-orderable} if there exists a linear ordering
in $G$ which is invariant under left translations.  The
group $G$ is left-orderable if and only if it contains a subset
$\Phi$,  called an {\it algebraic past} of $G$
(\cite{AV}), with the following properties:
\begin{enumerate}
\item  $\Phi\cap\Phi^{-1}=\emptyset,$
\item  $\Phi\cup \Phi^{-1} \cup\{e_G\}=G,$
\item  $\Phi\cdot \Phi\subseteq\Phi.$
\end{enumerate}
 Indeed, with respect to the algebraic past $\Phi$, one
obtains the desired linear ordering  on $G$ as follows: $g_1$ is
less than $g_2$ (write $g_1<_{\Phi} g_2$), if $g_2^{-1}g_1\in\Phi$.
Let $(g_i)_{i\geq 1}$ be a sequence in $G$. We say  that
a sequence $(g_i)_{i\geq1}$ {\it increasingly goes to infinity with
respect to $\Phi$} (write $g_i\nearrow \infty$ w.r.t $\Phi$) if
$g_i<_\Phi g_{i+1}$ for each $i\ge 1$ and for each element $g\in G$,
$\#\{ i\in \mathbb{N}: g_i<_\Phi g\}<+\infty$. We say
that a sequence $(g_i)_{i\geq1}$ {\it decreasingly goes to infinity
with respect to $\Phi$} (write $g_i\searrow \infty$ w.r.t $\Phi$) if
$g_i>_\Phi g_{i+1}$ for each $i\ge 1$ and  for each element $g\in
G$, $\#\{ i\in \mathbb{N}: g_i>_\Phi g\}<+\infty$.

The theory of left-orderable groups is a well developed subject in
group theory  which can be traced back to  the late
nineteenth century. For more details of this theory, we refer the
readers to widely known modern books \cite{BR}, \cite{KM}. It is
well-known that a nontrivial left-orderable group  must
be torsion-free,  and the category of left-orderable
groups includes torsion-free nilpotent groups and free groups.

 One main result of the paper concerns the existence of
asymptotic pairs in a positive entropy $G$-system as follows.

\begin{Theorem}\label{thm1}Let $G$ be a countable discrete infinite amenable group with the algebraic past $\Phi$,
$f_n\nearrow \infty$ w.r.t $\Phi$  with $f_n\Phi f_n^{-1}=\Phi$ for $n\ge 1$, and
$S$ be an infinite subset of $G$ such that $\sharp\{s\in S:s<_\Phi
f_n\}<\infty$ for each $n\ge 1$. Then any positive entropy
$G$-system has proper $S$-asymptotic pairs. More precisely, if
$(X,G)$ is a $G$-system and $\mu$ is a positive entropy ergodic
$G$-invariant Borel probability measure on $X$, then
 $W_S(x,G)\setminus \{x\}\neq \emptyset$ for $\mu$-a.e. $x\in X$.
\end{Theorem}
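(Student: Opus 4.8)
The plan is to produce a self-joining $\lambda$ of $(X,\mu)$ on $X\times X$ that is concentrated on $S$-asymptotic pairs and whose disintegration over the first coordinate is $\mu$-a.e. non-trivial; the latter is exactly the assertion $W_S(x,G)\setminus\{x\}\neq\emptyset$ for a.e.\ $x$. First I would fix a finite or countable generating partition $\xi$ of $(X,\mu)$ with $h_\mu(G,\xi)>0$ (a generator computing the entropy exists since $\mu$ has positive entropy). Writing $g^{-1}\xi$ for the partition with atoms $g^{-1}A$, $A\in\xi$, set $\mathcal{F}_\xi=\bigvee_{g\in\Phi^{-1}}g^{-1}\xi$, the $\sigma$-algebra carrying the information of $\xi$ along all strictly future group elements (equivalently, the $\xi$-coordinates at all past positions). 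Since $\Phi^{-1}$ is again an algebraic past, the Kolmogorov--Sinai formula for ordered amenable groups, applied to the reversed order, gives $H_\mu(\xi\mid\mathcal{F}_\xi)=h_\mu(G,\xi)>0$, so $\mathcal{F}_\xi\neq\mathcal{B}$.

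I would take $\lambda$ to be the relatively independent self-joining of $\mu$ over $\mathcal{F}_\xi$, namely $\lambda=\int_X \mu_x^{\mathcal{F}_\xi}\times\mu_x^{\mathcal{F}_\xi}\,d\mu(x)$, where $\mu_x^{\mathcal{F}_\xi}$ is the disintegration of $\mu$ over $\mathcal{F}_\xi$. Its disintegration over the first coordinate is $x\mapsto\mu_x^{\mathcal{F}_\xi}$, so for every $x$ with $\mu_x^{\mathcal{F}_\xi}\neq\delta_x$ the fibre charges points other than $x$, each of which will be shown to lie in $W_S(x,G)$. For the asymptoticity, note that $\lambda$-a.e.\ pair $(x,y)$ has the same $\mathcal{F}_\xi$-atom, hence $gx$ and $gy$ lie in a common atom of $\xi$ for every $g\in\Phi^{-1}$. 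Fix $\epsilon>0$; by the generating property there is a finite $F\subseteq G$ such that any two points in a common atom of $\bigvee_{h\in F}h^{-1}\xi$ are within $\epsilon$. For $s\in S$ with $s>_\Phi c_F$, where $c_F=\max_\Phi\{h^{-1}:h\in F\}$, one has $hs\in\Phi^{-1}$ for every $h\in F$, so $hsx$ and $hsy$ share a $\xi$-atom and therefore $d(sx,sy)<\epsilon$. The hypothesis on $S$ together with $f_n\nearrow\infty$ makes $\{s\in S:s\le_\Phi c_F\}$ finite, since it is contained in $\{s\in S:s<_\Phi f_n\}$ once $f_n>_\Phi c_F$. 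Thus only finitely many $s\in S$ violate $d(sx,sy)<\epsilon$, and $(x,y)$ is $S$-asymptotic.

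The main obstacle is to upgrade ``$\mu_x^{\mathcal{F}_\xi}\neq\delta_x$ on a set of positive measure'' (immediate from $H_\mu(\xi\mid\mathcal{F}_\xi)>0$) to ``$\mu$-a.e.\ $x$''. Here I would use the full force of the hypotheses on $f_n$. Because $f_n\Phi f_n^{-1}=\Phi$, the algebras $\mathcal{B}_n:=f_n^{-1}\mathcal{F}_\xi=\bigvee_{g\in\Phi^{-1}f_n}g^{-1}\xi$ decrease in $n$, and because $f_n\nearrow\infty$ their intersection is the remote tail of $\xi$, which by the Rokhlin--Sinai theorem for ordered amenable groups equals the Pinsker $\sigma$-algebra $\Pi$. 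Suppose $A=\{x:\mu_x^{\mathcal{F}_\xi}=\delta_x\}$ had $\mu(A)>0$. Transporting conditional measures by the measure-preserving map $f_n$ gives $\mu_x^{\mathcal{B}_n}=\delta_x$ exactly on $f_n^{-1}A$, a set of measure $\mu(A)$; by reverse Fatou $\mu(\limsup_n f_n^{-1}A)\ge\mu(A)>0$, and on $\limsup_n f_n^{-1}A$ the reverse-martingale convergence $\mu_x^{\mathcal{B}_n}\to\mu_x^{\Pi}$ forces $\mu_x^{\Pi}=\delta_x$. But $\{x:\mu_x^{\Pi}=\delta_x\}$ is $G$-invariant, because $\Pi$ is $G$-invariant and $x\mapsto\mu_x^{\Pi}$ is equivariant, so ergodicity makes it conull; this means $\Pi=\mathcal{B}$ and $h_\mu(G)=0$, a contradiction. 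Hence $\mu(A)=0$.

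Combining the last two paragraphs, for $\mu$-a.e.\ $x$ the fibre measure $\mu_x^{\mathcal{F}_\xi}$ is not $\delta_x$ and charges only points $y$ for which $(x,y)$ is $S$-asymptotic, giving $W_S(x,G)\setminus\{x\}\neq\emptyset$. The points on which I expect to spend the most care are the Rokhlin--Sinai identity $\bigcap_n\mathcal{B}_n=\Pi$ in the ordered-amenable setting and the measure-theoretic regularity (Lebesgue-space disintegration, a.e.\ reverse-martingale convergence) needed to run the $\limsup$/equivariance argument; the asymptoticity itself is comparatively soft once the correct one-sided algebra $\mathcal{F}_\xi$ has been identified and the order-theoretic window estimate for $S$ is in place.
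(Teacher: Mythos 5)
Your overall architecture (a one\hbox{-}sided ``past'' $\sigma$-algebra, the relatively independent self-joining over it, a tail/ergodicity argument to upgrade positivity of measure to full measure) is genuinely parallel to the paper's, but there is a fatal gap at the very first step: you need a single partition $\xi$ that simultaneously (i) has $h_\mu(G,\xi)>0$ and (ii) is a \emph{topological} generator in the sense that for every $\epsilon>0$ some finite join $\bigvee_{h\in F}h^{-1}\xi$ has all atoms of diameter less than $\epsilon$. Property (ii) is what you invoke to deduce $d(sx,sy)<\epsilon$ from the pair lying in a common $\mathcal{F}_\xi$-atom, and it is simply not available for a general $G$-system: a measure-theoretic generator (whose existence for amenable actions is itself delicate) only gives $\bigvee_{g}g^{-1}\xi=\mathcal{B}_X^\mu$ mod $\mu$, which says nothing about diameters of atoms of finite joins; such partitions exist topologically only for expansive-type actions. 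Without (ii) the entire asymptoticity claim for $\lambda$-a.e.\ pair collapses. The paper's way around this is the technical heart of its argument (Lemma 4.4): it takes a sequence $\alpha_i$ of finite partitions with $\mathrm{diam}(\alpha_i)\to 0$, pushes them into the past as $f_{k_i}^{-1}\alpha_i$, and uses Proposition 3.2 to choose the $k_i$ so that the entropy losses are summable (inequality (4.1)); this calibration is exactly what guarantees that the resulting past algebra $\widehat{\mathcal{P}_\Phi}$ both forces $S$-asymptoticity on its atoms \emph{and} remains strictly smaller than $\mathcal{B}_X^\mu$ with tail equal to $P_\mu(G)$. No single partition can be expected to do both jobs.

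A second, lesser gap is your appeal to a ``Rokhlin--Sinai theorem for ordered amenable groups'' giving $\bigcap_n f_n^{-1}\mathcal{F}_\xi=P_\mu(G)$ (together with the $G$-invariance of this tail, which your equivariance/ergodicity step also needs). In this generality that identity is not an off-the-shelf citation; it is essentially what the paper proves for its partition $\mathcal{P}$ in Lemma 4.4(2), again using the Pinsker formula and the summability (4.1). (Your $\limsup_n f_n^{-1}A$ / reverse-martingale argument for the a.e.\ upgrade is a nice alternative to the paper's route, which instead works with the set $J=\{x:\bigcap_{h\in\Phi^{-1}\cup\{e_G\}}W_{Sh}(x,G)\setminus\{x\}\neq\emptyset\}$, checks $\mu(gJ\triangle J)=0$ using $gJ\subseteq J$ for $g\in\Phi^{-1}\cup\{e_G\}$ plus invariance of $\mu$, and handles measurability of $J$ via Lusin's theorem on analytic sets --- an issue your disintegration approach sidesteps.) To repair your proof you would have to replace the single generator $\xi$ by a construction of the paper's type; as written, the argument does not go through for general positive-entropy $G$-systems.
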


Let $G$ be a countable discrete infinite amenable group with the
algebraic past $\Phi$.  If $S$ is a subset of a group $G$, then
$<S>$, {\it the subgroup generated by $S$}, is the smallest subgroup
of $G$ containing $S$. A semigroup $S$ of $G$ is called {\it
$\Phi$-admissible} if $S\subset \Phi^{-1}\cup\{e_G\}$, $<S>=G$ and
there exist $f_n\nearrow \infty$ w.r.t $\Phi$ and $h_n\searrow
\infty$ w.r.t $\Phi$ such that $\sharp\{s\in S:s<_\Phi
f_n\}<\infty$, $h_n\in S^{-1}$ and $f_n\Phi f_n^{-1}=\Phi$ for each
$n\ge 1$. It is clear that a $\Phi$-admissible semigroup $S$ of $G$
 is always an infinite semigroup since $G$ is
torsion-free.

 Another  main result of the paper concerns the
existence of certain chaotic sets in a positive entropy $G$-system
as follows.

\begin{Theorem}\label{thm2} Let $G$ be a countable discrete infinite amenable
group with the algebraic past $\Phi$ and $S$ be a $\Phi$-admissible semigroup of $G$. If $(X,G)$ is a
$G$-system and $\mu$ is a positive entropy ergodic $G$-invariant
Borel probability measure on $X$, then there exists $\delta>0$ such
that
 for $\mu$-a.e. $x\in X$, the stable set $W_S(x,G)$ contains a
 $(S^{-1},\delta)$-chaotic set which is a Cantor set.
\end{Theorem}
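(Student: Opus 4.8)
The plan is to adapt the Blanchard--Host--Ruette strategy \cite{BHR} to the left-orderable amenable setting: produce on each $S$-stable set a non-atomic ``internal'' measure concentrated on entropy pairs, upgrade these pairs to $(S^{-1},\delta)$-Li--Yorke pairs by a recurrence argument running in the $S^{-1}$ (past) direction, and finally extract a Cantor set by a Mycielski-type argument. First I would pass to the Pinsker $\sigma$-algebra $\mathcal{P}_\mu$ of $(X,G,\mu)$. Positive entropy makes $\mathcal{P}_\mu$ proper, so disintegrating $\mu=\int_X \mu_x\,d\mu(x)$ over $\mathcal{P}_\mu$ gives conditional measures $\mu_x$ that are non-atomic for $\mu$-a.e.\ $x$. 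The essential geometric point is that for $\mu$-a.e.\ $x$ the measure $\mu_x$ is supported on $W_S(x,G)$. This is where the hypotheses $f_n\nearrow\infty$ and $f_n\Phi f_n^{-1}=\Phi$ enter: using finite partitions refining the topology, the conjugation-invariance lets one form a decreasing sequence of ``future'' $\sigma$-algebras $\mathcal{F}_{f_n}$ (generated by coordinates $\geq_\Phi f_n$) whose intersection lies in $\mathcal{P}_\mu$, so a martingale/Borel--Cantelli argument shows that conditioning on $\mathcal{P}_\mu$ forces $\mu_x$-typical points to have asymptotic $S$-futures. Thus $\operatorname{supp}\mu_x\subseteq W_S(x,G)$ is a perfect set carrying a non-atomic measure.

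Next I would fix the scale $\delta$ using local entropy theory for amenable actions in the sense of Kerr--Li \cite{KL}. Form the relative product $\lambda=\int_X \mu_x\times\mu_x\,d\mu(x)$; positive entropy forces the off-diagonal part of $\lambda$ to be concentrated on the closed, $G$-invariant set of entropy pairs (equivalently IE-pairs), which is disjoint from the diagonal on a set of positive measure. Since $\lambda(\Delta)=0$ and some entropy pair has positive distance, I may fix $\delta>0$ with $\lambda(\{(y,z):d(y,z)>\delta\})>0$, while non-atomicity of $\mu_x$ gives $\lambda(\{d<\varepsilon\})>0$ for every $\varepsilon>0$. The task is then to show $\lambda$-a.e.\ pair is $(S^{-1},\delta)$-Li--Yorke. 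Because $S^{-1}$ is an infinite semigroup cofinal in the past (the admissibility of $S$ supplies $h_n\searrow\infty$ with $h_n\in S^{-1}$) and $\lambda$ is $G$-invariant, a recurrence argument along $S^{-1}$ yields, for $\lambda$-a.e.\ $(y,z)$, that the orbit $\{(sy,sz):s\in S^{-1}\}$ meets $\{d>\delta\}$ infinitely often and accumulates on $\Delta$; the former gives the separation $\#\{s\in S^{-1}:d(sy,sz)>\delta\}=\infty$, the latter produces an infinite $S'\subseteq S^{-1}$ along which $(y,z)$ is asymptotic.

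With the relation $R=\{(y,z):(y,z)\text{ is }(S^{-1},\delta)\text{-Li--Yorke}\}$ in hand, I would note that $R$ is a symmetric Borel set which is in fact $G_\delta$: both the separation clause and the proximality clause are countable intersections of the open conditions ``$\exists\,s\in S^{-1}$ outside a given finite set with $d(sy,sz)>\delta$'' (resp.\ $<1/k$). Since $R$ is $\lambda$-conull, it is $\mu_x\times\mu_x$-conull for $\mu$-a.e.\ $x$, hence a dense $G_\delta$ in the perfect space $\operatorname{supp}\mu_x\times\operatorname{supp}\mu_x$. Applying the Kuratowski--Mycielski theorem produces a Cantor set $C\subseteq\operatorname{supp}\mu_x\subseteq W_S(x,G)$ with $C\times C\setminus\Delta\subseteq R$; every pair of distinct points of $C$ is then $(S^{-1},\delta)$-Li--Yorke, so $C$ is the desired chaotic Cantor set, and the uniform $\delta$ is independent of $x$.

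The hard part will be the simultaneous control of the two opposite $\Phi$-directions. One must make the conditional measures $\mu_x$ genuinely live on the $S$-\emph{future} stable set while the entropy-driven separation and proximality occur precisely along $S^{-1}$, rather than along some unspecified infinite subset of $G$. In the $\mathbb{Z}$-case this is the clean past/future dichotomy of \cite{BHR}; for a general left-orderable amenable $G$ the ordering $\Phi$ must be reconciled with the F{\o}lner entropy machinery through the normalization $f_n\Phi f_n^{-1}=\Phi$, and the recurrence that produces ``infinitely often along $S^{-1}$'' must be justified for the specific cofinal sequence $h_n\in S^{-1}$ (using the semigroup structure of $S^{-1}$ and invariance of $\lambda$). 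Routing the single independence/entropy-pair input into \emph{both} the future-asymptotic statement and the past-separating statement is the technical crux on which the whole argument rests.
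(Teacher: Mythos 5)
Your overall architecture (Pinsker algebra, relatively independent self-joining $\lambda=\mu\times_{P_\mu(G)}\mu$, off-diagonal support fixing $\delta$, Mycielski at the end) matches the paper's, but there is a genuine gap at exactly the step you flag as the crux, and the mechanism you propose for it does not work. You disintegrate $\mu$ over the Pinsker algebra $P_\mu(G)$ and assert that the resulting conditional measures $\mu_x$ are supported on $W_S(x,G)$. This is false in general: for a $K$-system the Pinsker algebra is trivial, so $\mu_x=\mu$ and $\operatorname{supp}\mu_x=\operatorname{supp}\mu$, which is certainly not contained in the stable set of $x$. The fact that a decreasing sequence of ``future'' $\sigma$-algebras intersects down to $P_\mu(G)$ controls limits of conditional expectations; it does not localize the Pinsker fibers inside stable sets.

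What the paper actually does is disintegrate over a strictly larger $\sigma$-algebra $\mathcal{F}=\widehat{\mathcal{P}_\Phi}\vee P_\mu(G)$, where $\mathcal{P}=\bigvee_i f_{k_i}^{-1}\alpha_i$ is built (Lemma \ref{lem}, using the quantitative entropy estimates of Proposition \ref{prop3.1}) so that simultaneously $\overline{\mathcal{P}_\Phi(x)}\subseteq W_S(x,G)$ and $\bigcap_{g\in\Phi}g\mathcal{F}=P_\mu(G)$. The price is that the relevant joining becomes $\lambda_0=\mu\times_{\mathcal{F}}\mu$, for which neither $\lambda_0(\Delta_X)=0$ nor full measure of the Li--Yorke witness set is known a priori; these are proved for $\lambda$ and then \emph{transferred} to $\lambda_0$ by a separate comparison step (Claim 2 of the paper): with $\mathcal{F}_n=g_n\mathcal{F}$ for a sequence $g_n\in S^{-1}$ decreasing to infinity, the Martingale Theorem gives $\mu\times_{\mathcal{F}_n}\mu\to\lambda$ weakly, and $S$-monotonicity of the relevant closed and open sets yields $\lambda_0(F)\le\lambda(F)$ for closed $S$-supersets of themselves, hence $\lambda_0(\Delta_X)=0$ and $\lambda_0(W)=1$ for the explicit $G_\delta$ set $W$ of $(S^{-1},\delta)$-Li--Yorke pairs. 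Your proposal contains no substitute for this transfer: the sentence ``since $R$ is $\lambda$-conull, it is $\mu_x\times\mu_x$-conull for $\mu$-a.e.\ $x$'' silently identifies the Pinsker fibers with the $\mathcal{F}$-fibers, and that identification is precisely what fails. Until you either prove stable-set localization for the Pinsker disintegration (which is false) or supply the $\lambda$-to-$\lambda_0$ comparison, the argument does not close.
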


 To demonstrate applications of Theorem~\ref{thm2}, we
give below three examples of $\Phi$-admissible semigroups $S$ for
some special left-orderable amenable groups $G$. For the first
example, we consider $G=\mathbb{Z}^d$ for some $d\in \mathbb{N}$ and
let $S=\mathbb{Z}_+^d:=\{(n_1,\cdots,n_d)\in \mathbb{Z}^d:n_i\ge 0,
1\le i\le d\}$,
\begin{align*}
 \Phi=\{&(n_1,\cdots,n_d)\in \mathbb{Z}^d: \exists j\in
\{0,1,2,\cdots,d-1\} \text{ such that } \nonumber\\
&\sum_{\ell=1}^{d-k} n_\ell=0 \text{ for }k=0,\cdots,j-1 \text{ and
}\sum_{\ell=1}^{d-j} n_\ell<0\},
\end{align*}
and $f_n=(n,n,\cdots,n)\in \mathbb{Z}^d$, $n\in
\mathbb{N}$. Then it is not difficult to see that $\Phi$ is an
algebraic past of $G$,  $<S>=G$, $f_n\nearrow \infty$ w.r.t $\Phi$,
$f_n^{-1}\searrow \infty$ w.r.t $\Phi$, and $\sharp\{s\in S:s<_\Phi
f_n\}<\infty$, $f_n^{-1}\in S^{-1}$ and $f_n\Phi f_n^{-1}=\Phi$ for
each $n\ge 1$. Thus $S$ is $\Phi$-admissible.  By
applying Theorem \ref{thm2}, we immediately obtain the following
result.

\begin{Theorem}\label{cor1} Any positive entropy $\mathbb{Z}^d$-system has proper $\mathbb{Z}^d_+$-asymptotic
pairs.
More precisely, if $(X,\mathbb{Z}^d)$ is a $\mathbb{Z}^d$-system and
$\mu$ is a positive entropy ergodic $\mathbb{Z}^d$-invariant Borel
probability measure on $X$, then there exists $\delta>0$ such that
for $\mu$-a.e. $x\in X$, the stable set
$W_{\mathbb{Z}^d_+}(x,\mathbb{Z}^d)$ contains a
$(\mathbb{Z}^d_{-},\delta)$-chaotic set which is a Cantor set, where
$\mathbb{Z}_-^d:=\{(n_1,\cdots,n_d)\in \mathbb{Z}^d:n_i\le 0, 1\le
i\le d\}$.
\end{Theorem}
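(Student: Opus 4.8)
The plan is to read Theorem~\ref{cor1} as a direct instance of Theorem~\ref{thm2}: all of its dynamical content is already contained in that theorem, and what remains is to verify that the explicit data $G=\mathbb{Z}^d$, the set $\Phi$ displayed above, and the semigroup $S=\mathbb{Z}^d_+$ satisfy the definition of a $\Phi$-admissible semigroup. To organize the verification I would introduce the partial-sum map $L\colon\mathbb{Z}^d\to\mathbb{Z}^d$, $L(n_1,\dots,n_d)=(T_d,T_{d-1},\dots,T_1)$ with $T_m=\sum_{\ell=1}^{m}n_\ell$. Since $L$ is given by a unimodular integer matrix it is a group automorphism of $\mathbb{Z}^d$, and unwinding the definition of $\Phi$ shows that $n\in\Phi$ exactly when the first nonzero coordinate of $L(n)$ is negative; that is, $\Phi=L^{-1}(P)$, where $P=\{v:\ v<_{\mathrm{lex}}0\}$ is the lexicographically negative cone. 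Under $L$ the relation $<_\Phi$ becomes the lexicographic order, since $s<_\Phi t$ means $t^{-1}s\in\Phi$, i.e.\ $L(s)<_{\mathrm{lex}}L(t)$.

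First I would confirm that $\Phi$ is an algebraic past. The cone $P$ satisfies $P\cap(-P)=\emptyset$, $P\cup(-P)\cup\{0\}=\mathbb{Z}^d$, and $P+P\subseteq P$, because lexicographic order makes $\mathbb{Z}^d$ an ordered abelian group; transporting these three statements through the automorphism $L$ and using $\Phi^{-1}=-\Phi=L^{-1}(-P)$ yields properties~(1)--(3). Next I would check admissibility of $S=\mathbb{Z}^d_+$, which is closed under addition and hence a semigroup. If $s\in\mathbb{Z}^d_+\setminus\{0\}$ then the partial sums $T_m(s)$ are nonnegative with $T_d(s)=\sum_i n_i>0$, so $L(s)>_{\mathrm{lex}}0$ and $s\in\Phi^{-1}$, giving $S\subseteq\Phi^{-1}\cup\{e_G\}$; since $S$ contains the standard basis vectors, $\langle S\rangle=G$. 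Taking $f_n=(n,\dots,n)$ one computes $L(f_n)=(dn,(d-1)n,\dots,n)$; because $L(f_{n+1}-f_n)=(d,d-1,\dots,1)>_{\mathrm{lex}}0$ we get $f_n<_\Phi f_{n+1}$, and for any fixed $g$ the leading coordinate $dn-T_d(g)$ is eventually positive, so $f_n\nearrow\infty$. By the symmetric computation $h_n:=f_n^{-1}=(-n,\dots,-n)\in S^{-1}$ satisfies $h_n\searrow\infty$, and $f_n\Phi f_n^{-1}=\Phi$ is automatic since $\mathbb{Z}^d$ is abelian.

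The only mildly delicate point, and the one I would treat most carefully, is the finiteness condition $\sharp\{s\in S:\ s<_\Phi f_n\}<\infty$. Here $s<_\Phi f_n$ gives $L(s)<_{\mathrm{lex}}L(f_n)$, whose leading-coordinate comparison forces $T_d(s)=\sum_i n_i\le dn$. Since every coordinate of $s$ is nonnegative, $s$ then lies in the finite set $\{s\in\mathbb{Z}^d_{\ge0}:\sum_i n_i\le dn\}$, so the condition holds. The point worth flagging is that nonnegativity of the coordinates is exactly what converts a single bound on the top partial sum into genuine finiteness; without $S\subseteq\mathbb{Z}^d_+$ this step would fail.

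With $\Phi$-admissibility established, Theorem~\ref{thm2} applies verbatim and supplies $\delta>0$ such that for $\mu$-a.e.\ $x$ the stable set $W_S(x,G)=W_{\mathbb{Z}^d_+}(x,\mathbb{Z}^d)$ contains an $(S^{-1},\delta)$-chaotic Cantor set; and $S^{-1}=(\mathbb{Z}^d_+)^{-1}=\mathbb{Z}^d_-$ is precisely the $(\mathbb{Z}^d_-,\delta)$-chaotic set asserted. Finally, such a Cantor set is uncountable and contained in $W_{\mathbb{Z}^d_+}(x,\mathbb{Z}^d)$, so it produces points $y\ne x$ with $(x,y)$ a $\mathbb{Z}^d_+$-asymptotic pair, which is the proper asymptotic pair claimed in the first sentence (alternatively this follows from Theorem~\ref{thm1}). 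In short, there is no substantive obstacle particular to this corollary: the entropy-theoretic heavy lifting is done once and for all in Theorem~\ref{thm2}, and the work here is the routine but careful translation of the lexicographic data into the algebraic-past axioms and the admissibility conditions.
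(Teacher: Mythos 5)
Your proposal is correct and follows the same route as the paper: the paper proves Theorem~\ref{cor1} precisely by asserting that $\Phi$ is an algebraic past and that $S=\mathbb{Z}^d_+$ is $\Phi$-admissible with $f_n=(n,\dots,n)$, and then invoking Theorem~\ref{thm2}. You merely supply the details (via the unimodular partial-sum change of coordinates $L$) that the paper dismisses with ``it is not difficult to see,'' and your verification of each admissibility condition, including the finiteness of $\{s\in S: s<_\Phi f_n\}$, is accurate.
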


 Our next example treats the case of  Heisenberg group
 -- the two-step nilpotent countable matrix group
\begin{equation}\label{heizenberg}
G =\left\{\begin{pmatrix}
1& m_3 & m_1 \\
0 & 1 & m_2 \\
0 & 0 & 1
 \end{pmatrix} : m_1,m_2,m_3\in \mathbb{Z} \right\}.
\end{equation}
We fix the generators
$$T_1=\begin{pmatrix}
1&  0 & 1 \\
0&  1 & 0 \\
0&  0 & 1
 \end{pmatrix} \ \ ;
 T_2=\begin{pmatrix}
1 & 1 & 0 \\
0 & 1 & 0 \\
0 & 0 & 1
 \end{pmatrix} \ \ ; T_3=\begin{pmatrix}
1&  0 & 0 \\
0 & 1 & 1 \\
0 & 0 & 1
 \end{pmatrix}.$$
Then $$T_3^{n_3}T_2^{n_2}T_1^{n_1}=\begin{pmatrix}
1&  n_2 & n_1 \\
0&  1 & n_3 \\
0&  0 & 1
 \end{pmatrix}  \text{ for }n_1,n_2,n_3\in \mathbb{Z}. $$
Here $T_1$ generates the center $Z$ of $G$. Define the linear order
relation on the above generators by setting $T_3 > T_2 > T_1$,
together with the associated lexicographic linear order relation on
$G$,  i.e.,
$T_3^{j_3}T_2^{j_2}T_1^{j_1}>T_3^{j_3}T_2^{j_2}T_1^{j_1}$ if and
only if $(j_3, j_2, j_1)$ is lexicographically less than $(k_3, k_2,
k_1)$. This order relation is invariant with respect to the left
translations of $G$,  so we obtain an algebraic past
$\Phi$ in $G$ defined as the subset of all elements of G which are
less than the identity $I_3$. Since $G$ is nilpotent,
it is amenable (\cite[Proposition 4.6.6]{CC}). Thus the Heisenberg group $G$ is a
countable discrete infinite amenable group with algebraic past
$\Phi$.

Let $f_n=T_3^n, h_n=T_3^{-n}$ for $n\in \mathbb{N}$ and
\begin{equation}\label{s-heizenberg}
S:=\{ T_3^{n_3}T_2^{n_2}T_1^{n_1}: (n_3,n_2,n_1)\in \mathbb{Z}^3 \text{ with }n_3\ge n_2\ge 0 \text{ and } n_3^2\ge n_1\ge 0\}.
\end{equation}
Then $S\subset \Phi^{-1}\cup \{I_3\}$ is a semigroup of $G$, $<S>=
<T_3,T_3T_2,T_3T_1>=G$ and $f_n\nearrow \infty$ w.r.t. $\Phi$ and
$h_n\searrow \infty$ w.r.t. $\Phi$. It is also not hard to see that
$$\sharp\{s\in S:s<_\Phi f_n\}\le \sharp\{(n_3,n_2,n_1): n\ge n_3\ge
n_2\ge 0 \text{ and } n_3^2\ge n_1\ge 0\}\le (n^2+1)^3<\infty,$$ $f_n\Phi
f_n^{-1}=\Phi$ and $h_n\in S^{-1}$ for $n\ge 1$.
Summarizing up, $S$ is a $\Phi$-admissible semigroup of the
Heisenberg group $G$. Therefore, an application of
Theorem \ref{thm2} yields the following result.

\begin{Theorem}\label{cor2} Let $G$ be the Heisenberg group defined
 in \eqref{heizenberg} and $S$ be the semigroup defined
 in  \eqref{s-heizenberg}. Then any positive entropy
$G$-system has proper $S$-asymptotic pairs. More precisely, if
$(X,G)$ is a $G$-system and $\mu$ is a positive entropy ergodic
$G$-invariant Borel probability measure on $X$, then there exists
$\delta>0$ such that for $\mu$-a.e. $x\in X$, the stable set
$W_{S}(x,G)$ contains a $(S^{-1},\delta)$-chaotic set which is a
Cantor set.
\end{Theorem}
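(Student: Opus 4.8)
The plan is to obtain Theorem~\ref{cor2} as a direct application of Theorem~\ref{thm2}, so the entire task reduces to checking that the triple $(G,\Phi,S)$ meets that theorem's hypotheses: $G$ must be a countable discrete infinite amenable group with algebraic past $\Phi$, and $S$ must be a $\Phi$-admissible semigroup. The first requirement is immediate and is already recorded in the text preceding the statement: $G$ is a finitely generated two-step nilpotent group, hence amenable, and the lexicographic order induced by $T_3>T_2>T_1$ is left-invariant, so its negative cone $\Phi$ (the elements lexicographically smaller than $I_3$) satisfies properties (1)--(3) of an algebraic past by construction.

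For the second requirement I would fix the coordinates $T_3^{n_3}T_2^{n_2}T_1^{n_1}\leftrightarrow(n_3,n_2,n_1)$ and read off the group law from matrix multiplication,
\begin{equation*}
(a_3,a_2,a_1)\cdot(b_3,b_2,b_1)=(a_3+b_3,\ a_2+b_2,\ a_1+b_1+a_2 b_3).
\end{equation*}
With this in hand the defining clauses of $\Phi$-admissibility split into several verifications for $S=\{(n_3,n_2,n_1): n_3\ge n_2\ge 0,\ n_3^2\ge n_1\ge 0\}$: (i) $S$ is closed under the group law, hence a semigroup; (ii) every nonidentity $s\in S$ lies in $\Phi^{-1}$, which holds because the constraints force $n_3\ge 1$ for $s\ne I_3$ and the lexicographic order is dominated by that leading coordinate; (iii) $\langle S\rangle=G$, since $S$ contains $T_3$, $T_3T_2$ and $T_3T_1$, which generate $G$; and (iv) the sequences $f_n=T_3^n$ and $h_n=T_3^{-n}$ satisfy $f_n\nearrow\infty$, $h_n\searrow\infty$, $h_n\in S^{-1}$, the conjugation identity $f_n\Phi f_n^{-1}=\Phi$, and the finiteness $\sharp\{s\in S:s<_\Phi f_n\}<\infty$. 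For (iv) I would compute the conjugation action $T_3^n(a_3,a_2,a_1)T_3^{-n}=(a_3,a_2,a_1-n a_2)$, which leaves the first two coordinates unchanged and therefore preserves the lexicographic cone $\Phi$, and I would note that $f_n^{-1}s=(n_3-n,n_2,n_1)$, so $s<_\Phi f_n$ forces $n_3<n$ and bounds the count by $(n^2+1)^3$.

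The one step demanding genuine care---and the one that dictates the precise shape of $S$---is the semigroup closure (i). Writing the product of two elements of $S$ as $(n_3+m_3,\ n_2+m_2,\ n_1+m_1+n_2 m_3)$, the inequalities $n_3+m_3\ge n_2+m_2\ge 0$ and $n_1+m_1+n_2 m_3\ge 0$ are automatic, so the only nontrivial point is the upper bound $(n_3+m_3)^2\ge n_1+m_1+n_2 m_3$. I would expand $(n_3+m_3)^2=n_3^2+m_3^2+2n_3 m_3$, apply $n_3^2\ge n_1$ and $m_3^2\ge m_1$, and reduce to $2n_3 m_3\ge n_2 m_3$, which holds since $n_3\ge n_2\ge 0$ and $m_3\ge 0$. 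This is exactly why the third coordinate is bounded quadratically rather than linearly: the Heisenberg cocycle term $n_2 m_3$ appearing in the product must be absorbed by the cross term $2n_3 m_3$, and a linear constraint on $n_1$ would fail to be preserved under multiplication.

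Once $\Phi$-admissibility is established, Theorem~\ref{thm2} applies verbatim to $(G,\Phi,S)$ and yields, for each positive entropy ergodic $G$-invariant Borel probability measure $\mu$, a $\delta>0$ such that $W_S(x,G)$ contains an $(S^{-1},\delta)$-chaotic Cantor set for $\mu$-a.e.\ $x$. Finally, since a Cantor set is uncountable, for such $x$ the set $W_S(x,G)\setminus\{x\}$ is nonempty; any $y\ne x$ in this Cantor set gives a proper $S$-asymptotic pair $(x,y)$, which establishes the remaining assertion that $(X,G)$ has proper $S$-asymptotic pairs. I expect no serious obstacle beyond the closure computation (i), as every other clause is a routine consequence of the group law and the lexicographic order.
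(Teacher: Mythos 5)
Your proposal is correct and is essentially the paper's own argument: the paper proves Theorem~\ref{cor2} by verifying in the preceding paragraphs that $S$ is a $\Phi$-admissible semigroup of the Heisenberg group and then invoking Theorem~\ref{thm2}. Your verification matches that outline point for point, and your explicit check of semigroup closure (absorbing the cocycle term $n_2m_3$ into the cross term $2n_3m_3$, which is what forces the quadratic bound $n_3^2\ge n_1$) is a correct filling-in of a step the paper merely asserts.
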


 We now turn to a more general case  of the group of integral unipotent upper triangular
matrices. Given $d\in \mathbb{N}$,  consider the
matrix-valued function
$$
M({\bf a})=\left(
  \begin{array}{cccccccc}
    1 & a_1^1 & a_1^2 & a_1^3  &\ldots & a_1^{d-1} &a_1^d \\
    0 & 1     & a_2^1 & a_2^2   &\ldots &  a_2^{d-2}&a_2^{d-1}\\
    0 & 0     &1      & a_3^1   &\ldots &  a_3^{d-3} &a_3^{d-2}\\
   \vdots & \vdots & \vdots &  \vdots & \vdots & \vdots &\vdots\\
    0 & 0    &0       &0           & \ldots & a_{d-1}^1 &a_{d-1}^2 \\
    0 & 0    &0       &0            & \ldots & 1 &a_{d}^1 \\
    0 & 0    &0       &0            & \ldots & 0  & 1
  \end{array}
\right),
$$
 ${\bf a}=(a_i^k)_{1\le k\le d, 1\le i\le d-k+1}\in
\mathbb{\mathbb{Z}}^{d(d+1)/2}$. Then the group
$$G=:U_{d+1}(\mathbb{Z})=\{ M({\bf a}):{\bf a}=(a_i^k)_{1\le k\le d, 1\le i\le d-k+1}\in
\mathbb{\mathbb{Z}}^{d(d+1)/2}\}$$   of integral
unipotent upper triangular matrices is in fact a $d$-step nilpotent
group. It is clear that for  any $A\in
U_{d+1}(\mathbb{Z})$ there exists a unique ${\bf a}=(a_i^k)_{1\le
k\le d, 1\le i\le d-k+1}\in \mathbb{Z}^{d(d+1)/2}$ such that
$A=M({\bf a})$.  Moreover, for any ${\bf
a}=(a_i^k)_{1\le k\le d, 1\le i\le d-k+1}\in \mathbb{Z}^{d(d+1)/2}$
and ${\bf b}=(b_i^k)_{1\le k\le d, 1\le i\le d-k+1}\in
\mathbb{Z}^{d(d+1)/2}$,  if ${\bf c}=(c_i^k)_{1\le k\le
d, 1\le i\le d-k+1}\in \mathbb{Z}^{d(d+1)/2}$  is  such
that $M({\bf c})=M({\bf a})M({\bf b})$, then
\begin{equation}\label{c-eq-1}
c_i^k=a_i^k+(\sum
\limits_{j=1}^{k-1}a_i^{k-j}b_{i+k-j}^j)+b_i^k
\end{equation}
for $1\le k \le d$ and $1\le i \le d-k+1$.

Consider the following linear order relation on
$U_{d+1}(\mathbb{Z})$: $M({\bf a})<M({\bf b})$  if and only if
$$(a_d^1,a_{d-1}^1,\cdots,a_1^1;a_{d-1}^2,a_{d-2}^2,\cdots,a_1^2;a_{d-2}^3,\cdots,a_1^3;\cdots;a_2^{d-1},a_1^{d-1};a_1^d)$$ is lexicographically less than
$$(b_d^1,b_{d-1}^1,\cdots,b_1^1;b_{d-1}^2,b_{d-2}^2,\cdots,b_1^2;b_{d-2}^3,\cdots,b_1^3;\cdots;b_2^{d-1},b_1^{d-1};b_1^d),$$
where ${\bf a}=(a_i^k)_{1\le k\le d, 1\le i\le d-k+1}\in
\mathbb{Z}^{d(d+1)/2}$ and ${\bf b}=(b_i^k)_{1\le k\le d, 1\le i\le
d-k+1}\in \mathbb{Z}^{d(d+1)/2}$. This order relation is invariant
with respect to the translations of $U_{d+1}$ by \eqref{c-eq-1},
 so we can define an algebraic past $\Phi$ in
 $G$  as the subset of all elements of
$U_{d+1}(\mathbb{Z})$ which are less than the identity $I_{d+1}$,
 i.e.,
$$\Phi:=\{M({\bf a})\in U_{d+1}(\mathbb{Z}): M({\bf a})<I_{d+1}\}.$$
 As before, since  $G$ is nilpotent,
 it is amenable. Thus
$G=:U_{d+1}(\mathbb{Z})$ is a countable discrete
infinite amenable group with algebraic past $\Phi$.

 Consider the set of generators
for $U_{d+1}(\mathbb{Z})$ formed by the matrices $T_{i,j} = I_{d+1}
+ u_{i,j}, i < j, 1 \le i, j \le d+1$,  where each
$u_{i,j}$  is  a matrix unit  -- the matrix
whose $(k,p)$-entry equals $\delta_{ik}\delta_{jp}$ for all $1\le
k,p\le d+1$.  Let $f_n=T_{d,d+1}^n,
h_n=T_{d,d+1}^{-n}$, $n\in \mathbb{N}$,  and
\begin{equation}\label{s-upper-matrix}
S:=\{ M({\bf a}): (a_d^1)^k \ge a_i^k\ge 0 \text{ for }1\le k\le d, 1\le i\le d-k+1 \}.
\end{equation}
Then $S$ is a semigroup of $U_{d+1}(\mathbb{Z})$,
$S\subset \Phi^{-1}\cup \{I_{d+1}\}$ by \eqref{c-eq-1} and
\eqref{s-upper-matrix},
$$<S>=<\{T_{d,d+1}\}\cup \{T_{d,d+1}T_{i,j}: i < j, 1 \le i, j \le
d+1\}>= G,
$$ and $f_n\nearrow \infty$ w.r.t. $\Phi$ and $h_n\searrow \infty$ w.r.t. $\Phi$.
It is not hard to see that
\begin{align*}
&\hskip0.5cm \sharp\{s\in S:s<_\Phi f_n\}\\
&\le
\sharp\{(a_i^k)_{1\le k\le d, 1\le i\le d-k+1}\in
\mathbb{Z}^{d(d+1)/2}: n^k\ge a_i^k \ge 0 \text{ for }1\le k\le d, 1\le i\le d-k+1 \}\\
&<\infty,
\end{align*}
$f_n\Phi f_n^{-1}=\Phi$, $f_n\in S$ and $h_n\in S^{-1}$ for $n\ge
1$.  Summarizing up, $S$ is a $\Phi$-admissible
semigroup of the group  $G$, hence an application of
Theorem \ref{thm2}  yields the following result.

\begin{Theorem}\label{cor3} Let  $S$ be the semigroup defined in \eqref{s-upper-matrix}. Then
any positive entropy $U_{d+1}(\mathbb{Z})$-system has proper $S$-asymptotic
pairs.
More precisely, if $(X,U_{d+1}(\mathbb{Z}))$ is a $U_{d+1}(\mathbb{Z})$-system and
$\mu$ is a positive entropy ergodic $U_{d+1}(\mathbb{Z})$-invariant Borel
probability measure on $X$, then there exists $\delta>0$ such that
for $\mu$-a.e. $x\in X$, the stable set
$W_{S}(x,U_{d+1}(\mathbb{Z}))$ contains a
$(S^{-1},\delta)$-chaotic set which is a Cantor set.
\end{Theorem}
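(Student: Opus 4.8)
The plan is to verify that the semigroup $S$ defined in \eqref{s-upper-matrix} is $\Phi$-admissible and then to invoke Theorem~\ref{thm2} directly, obtaining $S^{-1}$-chaos inside the stable set $W_{S}(x,U_{d+1}(\mathbb{Z}))$. Once the four defining clauses of $\Phi$-admissibility are checked, the conclusion is immediate, so all of the work lies in that verification.

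First I would confirm that $S$ is closed under multiplication. Writing $M({\bf c})=M({\bf a})M({\bf b})$ with ${\bf a},{\bf b}$ subject to the box constraints $0\le a_i^k\le (a_d^1)^k$ and $0\le b_i^k\le (b_d^1)^k$, formula \eqref{c-eq-1} exhibits each $c_i^k$ as a sum of nonnegative terms, so $c_i^k\ge 0$, while the $(d,1)$-entry satisfies $c_d^1=a_d^1+b_d^1$. Bounding every factor in \eqref{c-eq-1} by its box constraint gives $c_i^k\le \sum_{j=0}^{k}(a_d^1)^{k-j}(b_d^1)^j\le \sum_{j=0}^{k}\binom{k}{j}(a_d^1)^{k-j}(b_d^1)^j=(a_d^1+b_d^1)^k=(c_d^1)^k$, so ${\bf c}$ again satisfies the box constraints and $S$ is a semigroup. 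The inclusion $S\subseteq \Phi^{-1}\cup\{I_{d+1}\}$ is then clear, since a nonzero element of $S$ has all entries nonnegative and hence its first nonzero coordinate in the defining lexicographic order is positive, placing it above $I_{d+1}$.

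Next I would establish $<S>=G$. Both $T_{d,d+1}$ and, for every $i<j$, the product $T_{d,d+1}T_{i,j}$ lie in $S$: a short application of \eqref{c-eq-1} shows the cross terms vanish for index reasons, so these products have entries in $\{0,1\}$ with top entry equal to $1$ and therefore meet the box constraints. Hence $T_{i,j}=T_{d,d+1}^{-1}(T_{d,d+1}T_{i,j})\in <S>$ for all $i<j$, and since the $T_{i,j}$ generate $U_{d+1}(\mathbb{Z})$ we get $<S>=G$. For the remaining clause, $f_n=T_{d,d+1}^n$ has $a_d^1=n$ as its only nonzero coordinate, which is the highest-priority coordinate of the order, so $f_n\nearrow\infty$ and $h_n=f_n^{-1}\searrow\infty$ with $h_n\in S^{-1}$; moreover $s<_\Phi f_n$ forces $a_d^1(s)\le n$, whence every coordinate of $s\in S$ is bounded by $n^k$ and $\sharp\{s\in S:s<_\Phi f_n\}<\infty$.

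The main obstacle is the conjugation invariance $f_n\Phi f_n^{-1}=\Phi$. Here I would compute the conjugation explicitly: writing $E:=u_{d,d+1}$, the relations $E^2=0$, $EM=E$ and $EME=0$ for every $M\in G$ give $f_n M({\bf a}) f_n^{-1}=M({\bf a})-n\sum_{i<d}a_i^{d-i}\,u_{i,d+1}$, so conjugation alters only the last-column entries $a_i^{d+1-i}$, replacing each by $a_i^{d+1-i}-n\,a_i^{d-i}$. The key observation is that in the lexicographic order each altered entry $a_i^{d+1-i}$ is the leading coordinate of its diagonal block $k=d+1-i$, while the correction $a_i^{d-i}$ lives in the strictly earlier block $k=d-i$; thus conjugation is unipotent-triangular with respect to the priority ordering of coordinates, each coordinate being shifted only by a function of strictly higher-priority coordinates. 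Such a map preserves the lexicographic order, because at the first coordinate where two elements differ the correction terms coincide, so it fixes $\Phi=\{g:g<_\Phi I_{d+1}\}$ setwise and $f_n\Phi f_n^{-1}=\Phi$. With all clauses verified, $S$ is $\Phi$-admissible and Theorem~\ref{thm2} applies verbatim.
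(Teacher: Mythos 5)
Your proposal is correct and takes exactly the route the paper does: verify the four clauses of $\Phi$-admissibility for the semigroup in \eqref{s-upper-matrix} and then apply Theorem~\ref{thm2}. The only difference is that you supply the computations the paper leaves to the reader (the binomial bound $c_i^k\le (a_d^1+b_d^1)^k$ for closure under multiplication, and the identity $f_nM({\bf a})f_n^{-1}=M({\bf a})-n\sum_{i<d}a_i^{d-i}u_{i,d+1}$ together with the triangularity-with-respect-to-coordinate-priority argument for $f_n\Phi f_n^{-1}=\Phi$), and these all check out.
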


The paper is organized as follows. In Section 2, we
review some basic dynamical properties for $G$-systems. In Section
3, we establish  a Pinsker formula for a countable
discrete infinite amenable group $G$ with the algebraic past $\Phi$.
In Section 4,  we introduce Pinsker $\sigma$-algebra and investigate
some basic properties of it. The proof of the main results is given
in Section 5.

\section{Preliminary}
In this section,   we review some basic notions and
fundamental properties of $G$-systems. Let  $G$ be a countable discrete infinite amenable
group with unit element $e_G$. By a {\it $G$-system} $(X, G)$ we
mean that $X$ is a compact metric space and $\Gamma: G\times
X\rightarrow X, (g, x)\mapsto g x$ is a continuous mapping
 satisfying
\begin{enumerate}

\item $\Gamma (e_G, x)= x$ for each $x\in X$,

\item $\Gamma (g_1, \Gamma (g_2, x))= \Gamma (g_1 g_2, x)$ for each
$g_1, g_2\in G$ and $x\in X$.
\end{enumerate}
In the following, we fix a $G$-system $(X,G)$.  Then
under the induced continuous action on $X\times X$:
$g(x_1,x_2):=(gx_1,gx_2)$, $g\in G$, $(x_1,x_2)\in X\times X$,
$(X\times X,G)$ is also a $G$-system.

\subsection{F{\o}lner sequence and Entropy}
Let $F(G)$  be the set of
all finite non-empty subsets of $G$. If $(F_n)_{n\ge 1}$ is a F{\o}lner sequence of $G$, then
 \begin{equation}\label{folner-1}
 \lim_{n\rightarrow\infty}\frac{|F_n\triangle KF_n|}{|F_n|}=0 \text{ and }\lim_{n\rightarrow\infty} \frac{|\{ g\in F_n:Kg\subset F_n\}|}{|F_n|}=1
 \end{equation}
 for every $K\in F(G)$.

A {\it cover} of $X$ is a finite family of subsets of
$X$ whose union is $X$. A  {\it partition} of $X$ is a
cover of $X$ whose elements are pairwise disjoint. Denote by
$\mathcal{C}_X$ (resp. $\mathcal{C}_X^{o}$) the set of all covers
(resp. finite open covers) of $X$  and  by
$\mathcal{P}_X$ (resp. $\mathcal{P}_X^b$) the set of all partitions
of $X$ (resp. finite Borel partition).

Given two covers $\mathcal{U}$, $\mathcal{V}\in C_X$, $\mathcal{U}$
is said to be  {\it finer} than $\mathcal{V}$ (denoted
by $\mathcal{U} \succeq \mathcal{V}$) if each element of
$\mathcal{U}$ is contained in some element of $\mathcal{V}$. Let
$\mathcal{U}$ $\vee$ $\mathcal{V}$$= \{U \cap V : U \in \mathcal{U},
V \in \mathcal{V}\}$. Denote by $N(\mathcal{U})$ the number of sets
in a subcover of $\mathcal{U}$ of minimal cardinality.
The {\it entropy of $\mathcal{U}\in \mathcal{C}_X^{o}$ with respect
to $G$ is defined} by
\begin{equation*}
\label{TE}
h_{top}(G,\mathcal{U})=\lim\limits_{n\rightarrow+\infty}\frac{1}{|F_n|}
\log N(\bigvee_{g\in F_n}g^{-1}\mathcal{U}),
\end{equation*}
where $F_n$ is a F{\o}lner sequence in the group $G$. As is shown in
\cite[Theorem 6.1]{LW} (see also \cite{K,OW}) the limit exists and
is independent of  F{\o}lner sequences.
The {\it topological entropy} of $(X,G)$  is then
defined by
$$h_{top}(G)=h_{top}(G,X)=\sup_{\mathcal{ U}\in C_X^o}
h_{top}(G,\mathcal{U}).$$

Denote by $\mathcal{B}_X$ the collection of all Borel subsets of $X$
and $\mathcal{M}(X)$ the set of all Borel probability measures on
$X$. For $\mu\in \mathcal{M} (X)$, denote by $\text{supp} (\mu)$ the
{\it support} of $\mu$, i.e., the smallest closed subset $W\subseteq
X$ such that $\mu (W)= 1$. $\mu\in \mathcal{M} (X)$ is called {\it
$G$-invariant} if $g \mu= \mu$ for each $g\in G$,  and
called {\it ergodic} if  it is $G$-invariant and $\mu
(\bigcup_{g\in G} g A)= 0$ or $1$ for any $A\in \mathcal{B}_X$.
Denote by $\mathcal{M} (X, G)$ (resp. $\mathcal{M}^e(X,G)$) the set
of all $G$-invariant (resp. ergodic $G$-invariant) elements in
$\mathcal{M} (X)$. Note that the amenability of $G$ ensures that
 $\mathcal{M}^e (X, G)\ne \emptyset$ and both
$\mathcal{M}(X)$ and $\mathcal{M}(X, G)$ are convex compact metric
spaces when they are endowed with the weak$^*$-topology.

For $\mu\in \mathcal{M}(X)$, denote by $\mathcal{B}_X^\mu$ the completion of $\mathcal{B}_X$ under $\mu$,
and define
$$\mathcal{P}_X^\mu=\{ \alpha\in \mathcal{P}_X: \text{each element  in }\alpha \text{ belongs to }\mathcal{B}_X^\mu\}.$$
Given $\alpha\in \mathcal{P}^\mu_X$ and
 a sub-$\sigma$-algebra $\mathcal{A}$ of $\mathcal{B}_X^\mu$,
 define
\begin{equation*}
H_{\mu}
(\alpha | \mathcal{A})= \sum_{A\in \alpha} \int_X
 -\mathbb{E}_\mu (1_A| \mathcal{A}) \log \mathbb{E}_\mu (1_A| \mathcal{A}) d \mu,
\end{equation*}
where $\mathbb{E}_\mu (1_A| \mathcal{A})$ is the expectation of
$1_A$ with respect to  $\mathcal{A}$. One standard fact is that
$H_{\mu} (\alpha | \mathcal{A})$ increases with respect to $\alpha$
and decreases with respect to $\mathcal{A}$. Set $\mathcal{N}=
\{\emptyset, X\}$  and define
\begin{equation*}
H_\mu (\alpha)= H_\mu (\alpha| \mathcal{N})= \sum_{A\in \alpha}
-\mu(A) \log \mu(A).
\end{equation*}
Note that  any $\beta \in \mathcal{P}_X^\mu$ naturally
generates a sub-$\sigma$-algebra $\mathcal{F} (\beta)$ of
$\mathcal{B}_X^\mu$.  We then define
\begin{equation*}
H_{\mu}(\alpha|\beta)= H_\mu (\alpha| \mathcal{F} (\beta))=
H_{\mu}(\alpha\vee \beta)- H_{\mu}(\beta).
\end{equation*}

Given $\mu\in \mathcal{M}(X,G)$ and $\alpha\in \mathcal{P}_X^\mu$. The {\it measure-theoretic entropy} of $\mu$ relative to $\alpha$ is
defined by
\begin{equation*}
h_{\mu}(G,\alpha)=\lim\limits_{n\rightarrow+\infty}\frac{1}{|F_n|}
H_\mu(\bigvee_{g\in F_n}g^{-1}\alpha),
\end{equation*}
where $F_n$ is a Fl{\o}ner sequence in the group G.  As
shown in \cite[Theorem 6.1]{LW} (see also \cite{K,OW}),  the limit
exists and is independent of  F{\o}lner sequences. The
{\it measure-theoretic entropy} of $\mu$ is defined by
$$h_{\mu}(G)=h_{\mu}(G,X)=\sup_{\alpha\in \mathcal{P}^b_X}
h_{\mu}(G,\alpha).$$

A sub-$\sigma$-algebra $\mathcal{A}$ of $\mathcal{B}_X^\mu$ is said
to be {\it $G$-invariant} if $g\mathcal{A}=\mathcal{A}$ for any
$g\in G$. For the conditional entropy of $\alpha\in
\mathcal{P}_X^\mu$ with respect to a $G$-invariant sub
$\sigma$-algebra $\mathcal{A}$ of  $\mathcal{B}_X^\mu$ we
 define
$$h_\mu(G,\alpha|\mathcal{A}) = \lim \limits_{n\rightarrow+\infty}\frac{1}{|F_n|}
H_\mu(\bigvee_{g\in F_n}g^{-1}\alpha|\mathcal{A}),$$ where $F_n$ is
a F{\o}lner sequence in the group $G$. One can deduce
the existence of this limit and its independence of the sequence
$F_n$  from  \cite[Theorem 6.1]{LW} (see also
\cite{K,OW}  and  \cite{WZ} for an extension of the
approach of \cite{K} to the conditional case). The {\it conditional
entropy} of $\mu$ with respect to $\mathcal{A}$ is defined by
$$h_\mu(G|\mathcal{A})=\sup_{\alpha\in \mathcal{P}^b_X}
h_{\mu}(G,\alpha|\mathcal{A}).$$
It is well known that $h_{top}(G)=\sup_{\mu\in \mathcal{M}^e(X,G)}h_\mu(G)$ (see e.g. \cite{OP,ST}).

\subsection{Measurable partition and Relatively independent squares}

Let $\mu\in \mathcal{M}(X,G)$. Then $(X,\mathcal{B}_X^\mu,\mu,G)$ is
a Lebesgue system. If $\{ \alpha_i\}_{i\in I}$ is a countable family
of finite Borel partitions of $X$,  then the partition
$\alpha=\bigvee_{i\in I}\alpha_i$ is called a {\it measurable
partition}. The sets $A\in \mathcal{B}_X^\mu$, which are unions of
atoms of $\alpha$, form a sub-$\sigma$-algebra of
$\mathcal{B}_X^\mu$,  to be denoted  by
$\widehat{\alpha}$ or $\alpha$ if there is no ambiguity.
In fact, every sub-$\sigma$-algebra of
$\mathcal{B}_X^\mu$ coincides with a $\sigma$-algebra constructed in
 the way above (mod $\mu$). The following
result is well-known (see e.g., \cite[Theorem 14.26]{Gbook}).

\begin{Theorem} (Martingale Theorem) Let $(\mathcal{A}_n)_{n\ge 1}$ be a decreasing sequence (resp. an in creasing sequence) of sub-$\sigma$-algebras of $\mathcal{B}_X^\mu$
and let $\mathcal{A}= \bigcap_{n\ge 1} \mathcal{A}_n$ (resp.
$\mathcal{A}= \bigvee_{n\ge 1} \mathcal{A}_n$). Then for every $f
\in L^2(X,\mathcal{B}_X,\mu)$, $\mathbb{E}_\mu(f |
\mathcal{A}_n)\rightarrow \mathbb{E}_\mu(f | \mathcal{A})$ in
$L^2(\mu)$ and  also $\mu$-almost everywhere.
\end{Theorem}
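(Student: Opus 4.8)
The plan is to separate the two modes of convergence. The $L^2$ convergence will follow from elementary Hilbert space geometry, since for each $n$ the conditional expectation $\mathbb{E}_\mu(\cdot\,|\,\mathcal{A}_n)$ is precisely the orthogonal projection $P_n$ of $L^2(X,\mathcal{B}_X^\mu,\mu)$ onto the closed subspace $H_n := L^2(X,\mathcal{A}_n,\mu)$. The almost-everywhere convergence is the genuinely harder assertion, and I would obtain it from Doob's upcrossing inequality together with the martingale structure of the sequence.

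First I would treat the $L^2$ part. In the increasing case the subspaces $H_n$ are nested increasing, so $H_n \uparrow H_\infty := \overline{\bigcup_{n} H_n}$; a standard density argument (approximating indicators of sets in $\bigvee_n \mathcal{A}_n$ by indicators of sets in $\bigcup_n \mathcal{A}_n$) identifies $H_\infty$ with $L^2(X,\mathcal{A},\mu)$ for $\mathcal{A}=\bigvee_n\mathcal{A}_n$. For an increasing family of projections whose closed union is $H_\infty$ one has $P_n f \to P_\infty f$ in norm for every $f$; since $P_\infty = \mathbb{E}_\mu(\cdot\,|\,\mathcal{A})$, this gives the claim. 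In the decreasing case the $H_n$ are nested decreasing with $\bigcap_n H_n = L^2(X,\mathcal{A},\mu)$ for $\mathcal{A}=\bigcap_n\mathcal{A}_n$ (a function lying in every $L^2(\mathcal{A}_n)$ is measurable with respect to the intersection), and the corresponding fact for a decreasing family of projections, $P_n f \to P_\infty f$, again yields the $L^2$ convergence.

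For the almost-everywhere statement I would invoke the martingale structure. In the increasing case $(\mathbb{E}_\mu(f\,|\,\mathcal{A}_n))_{n\ge1}$ is a martingale adapted to the filtration $(\mathcal{A}_n)$, and it is $L^2$-bounded because $\|\mathbb{E}_\mu(f\,|\,\mathcal{A}_n)\|_2 \le \|f\|_2$. Doob's upcrossing inequality bounds the expected number of upcrossings of any rational interval, forcing $\mu$-a.e. convergence of the sequence; the limit is necessarily a version of the $L^2$-limit $\mathbb{E}_\mu(f\,|\,\mathcal{A})$ already identified. In the decreasing case $(\mathbb{E}_\mu(f\,|\,\mathcal{A}_n))_{n\ge1}$ is a reverse martingale, and the reverse-martingale form of the upcrossing estimate (taken over the finite index range $1,\dots,N$ and then letting $N\to\infty$) gives a.e. convergence, again to $\mathbb{E}_\mu(f\,|\,\mathcal{A})$.

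The main obstacle is the almost-everywhere convergence, specifically controlling the oscillations of the conditional-expectation sequence uniformly; this is exactly where the upcrossing (equivalently, Doob's maximal) inequality is indispensable, since pointwise control does not follow from $L^2$ convergence alone. Matching the a.e. limit with the previously identified $L^2$ limit is then routine, as $L^2$ convergence already forces a subsequence converging $\mu$-a.e. to the same function.
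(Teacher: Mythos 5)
The paper does not prove this statement at all --- it is quoted as a well-known result with a pointer to \cite[Theorem 14.26]{Gbook} --- so there is no in-paper argument to compare against; your sketch is the standard textbook proof (conditional expectations as orthogonal projections for the $L^2$ convergence, Doob's forward and reverse martingale convergence theorems via upcrossing estimates for the a.e. convergence) and is correct in outline. The only point worth flagging is that the identification $\bigcap_n L^2(X,\mathcal{A}_n,\mu)=L^2(X,\bigcap_n\mathcal{A}_n,\mu)$ in the decreasing case holds modulo $\mu$-null sets (a function lying in every $L^2(\mathcal{A}_n)$ only has an $\mathcal{A}_n$-measurable version for each $n$), which is harmless here since the paper works throughout with sub-$\sigma$-algebras of the completion $\mathcal{B}_X^\mu$ and identifies $\sigma$-algebras mod $\mu$.
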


Let $\mathcal{F}$ be a sub-$\sigma$-algebra
$\mathcal{B}_X^\mu$ and $\alpha$ be a measurable partition of
$X$ with $\widehat{\alpha}=\mathcal{F}$ (mod $\mu$). $\mu$ can be
disintegrated over $\mathcal{F}$ as $\mu=\int_X \mu_x d \mu(x)$
where $\mu_x\in \mathcal{M}(X)$  and $\mu_x(\alpha(x))=1$ for
$\mu$-a.e. $x\in X$. The disintegration is characterized by the
properties \eqref{meas1} and \eqref{meas3} below:
\begin{equation}\label{meas1}  \text{for every } f \in
L^1(X,\mathcal{B}_X,\mu),  f \in L^1(X,\mathcal{B}_X,\mu_x) \text{
for $\mu$-a.e. } x\in X,
\end{equation}
\hskip2cm and the map $x \mapsto \int_X  f(y)\,d\mu_x(y)$ is in
$L^1(X,\mathcal{F},\mu)$;

\begin{equation}
\label{meas3} \text{for every } f\in
L^1(X,\mathcal{B}_X,\mu),\mathbb{E}_{\mu}(f|\mathcal{F})(x)=\int_X
f\,d\mu_{x} \ \ \text{for $\mu$-a.e. } x\in X.
\end{equation}
Then for any $f \in L^1(X,\mathcal{B}_X,\mu)$,
\begin{equation*}
\int_X \left(\int_X f\,d\mu_x \right)\, d\mu(x)=\int_X f \,d\mu.
\end{equation*}
  Let $\mathcal{F}$ be a sub-$\sigma$-algebra of
$\mathcal{B}_X^\mu$ and $\mu=\int_X \mu_x d \mu(x)$ be the
disintegration of $\mu$ over $\mathcal{F}$. According to
\cite[Pages 231-232]{HF} and \cite[Pages 110-115]{Hbook}, the {\it conditional square} (or {\it conditional
product}) $\mu\times_\mathcal{F}\mu$ of $\mu$ relatively to
$\mathcal{F}$ is the  Borel probability measure
$\mu\times_\mathcal{F}\mu$ on $X\times X$ such that
\begin{equation}\label{ris}
\mu\times_\mathcal{F}\mu(A\times B)=\int_X \mathbb{E}_\mu
(1_A|\mathcal{F})(x)\mathbb{E}_\mu
(1_B|\mathcal{F})(x)d\mu(x)=\int_X \mu_x(A)\mu_x(B)d\mu(x)
\end{equation} for all $A,B\in \mathcal{B}_X$.
 It is clear that both projections of $\mu\times_\mathcal{F}\mu$ to $X$ are equal to
 $\mu$.

By standard arguments, for every pair of bounded Borel functions $f, h$ on $X$ one has
$$\int_{X\times X} f(x)h(y)d(\mu\times_\mathcal{F}\mu)(x,y)=\int_X \mathbb{E}_\mu(f|\mathcal{F})(x)\mathbb{E}_\mu(h|\mathcal{F})(x)d\mu(x).$$
By  \eqref{ris}, it is not hard to see that
\begin{equation}\label{eq-cdeq}
\mu \times_{g\mathcal{F}}\mu=g (\mu\times_{\mathcal{F}}\mu)
\end{equation}
for any $g\in G$ and  any sub-$\sigma$-algebra
$\mathcal{F}$ of $\mathcal{B}_X^\mu$, where $g(\mu\times_\mathcal{F}\mu)(E):=\mu\times_\mathcal{F}\mu(g^{-1}E)$
for any Borel subset $E$ of $X\times X$.

\section{Pinsker formula}
In this section, we will establish the  following
Pinsker formula for a countable discrete infinite amenable group $G$
with the algebraic past $\Phi$.

\begin{Theorem}\label{lm1} (Pinsker formula) Let $G$ be a countable discrete infinite amenable group
with algebraic past $\Phi$,  $(X,G)$ be a $G$-system,
$\mu\in \mathcal{M}(X,G)$, and $\mathcal{A}$ be a $G$-invariant
sub-$\sigma$-algebra of $\mathcal{B}_X^\mu$. Then for any
$\alpha,\beta\in \mathcal{P}^\mu_X$,
$$h_\mu(G,\alpha\vee\beta|\mathcal{A})=h_\mu(G,\beta|\mathcal{A})+H_\mu(\alpha|\beta_G\vee\alpha_\Phi\vee \mathcal{A}),$$
where $\beta_G=\bigvee_{g\in G}g\beta$ and
$\alpha_\Phi=\bigvee_{g\in \Phi}g\alpha$. In
particular, $h_\mu(G,\alpha|\mathcal{A})=H_\mu(\alpha|\alpha_\Phi
\vee \mathcal{A})$.
\end{Theorem}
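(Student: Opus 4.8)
The plan is to reduce the identity to the convergence of a single ``cross term'' and then identify its limit by a martingale argument, so that the ``in particular'' clause follows by taking $\beta=\{X\}$ (which makes $\beta_G$ trivial and $h_\mu(G,\beta|\mathcal{A})=0$). Since $\alpha,\beta$ are finite partitions, $\bigvee_{g\in F_n}g^{-1}(\alpha\vee\beta)=\big(\bigvee_{g\in F_n}g^{-1}\alpha\big)\vee\big(\bigvee_{g\in F_n}g^{-1}\beta\big)$, so the chain rule for conditional entropy gives
\[
H_\mu\Big(\bigvee_{g\in F_n}g^{-1}(\alpha\vee\beta)\,\Big|\,\mathcal{A}\Big)
=H_\mu\Big(\bigvee_{g\in F_n}g^{-1}\beta\,\Big|\,\mathcal{A}\Big)
+H_\mu\Big(\bigvee_{g\in F_n}g^{-1}\alpha\,\Big|\,\bigvee_{g\in F_n}g^{-1}\beta\vee\mathcal{A}\Big).
\]
Dividing by $|F_n|$ and letting $n\to\infty$, the first term converges to $h_\mu(G,\beta|\mathcal{A})$ by definition, so it remains to show the normalized cross term tends to $H_\mu(\alpha|\beta_G\vee\alpha_\Phi\vee\mathcal{A})$.

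For the cross term, set $\gamma_n=\bigvee_{g\in F_n}g^{-1}\beta$ and enumerate $F_n=\{g_1,\dots,g_N\}$, $N=|F_n|$, increasingly with respect to the \emph{right} order defined by $g\blacktriangleleft h\iff hg^{-1}\in\Phi$; properties (1)--(2) of $\Phi$ make this a total order and property (3), $\Phi\cdot\Phi\subseteq\Phi$, makes it transitive. Using the chain rule together with the $G$-invariance of $\mu$ and of $\mathcal{A}$ (so $g_i\mathcal{A}=\mathcal{A}$),
\[
H_\mu\Big(\bigvee_{i=1}^{N}g_i^{-1}\alpha\,\Big|\,\gamma_n\vee\mathcal{A}\Big)
=\sum_{i=1}^{N}H_\mu\Big(\alpha\,\Big|\,\bigvee_{j<i}(g_ig_j^{-1})\alpha\vee g_i\gamma_n\vee\mathcal{A}\Big).
\]
The point of using the right order is exactly that $j<i$ forces $g_ig_j^{-1}\in\Phi$ (the left order $<_\Phi$ would instead produce $g_i^{-1}g_j$ and fail for noncommutative $G$); since also $g_i\gamma_n=\bigvee_{g\in F_n}(g_ig^{-1})\beta\subseteq\beta_G$, every conditioning $\sigma$-algebra above is contained in $\beta_G\vee\alpha_\Phi\vee\mathcal{A}$. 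Hence each summand is $\ge H_\mu(\alpha|\beta_G\vee\alpha_\Phi\vee\mathcal{A})=:c$, which immediately gives $\liminf\ge c$ for the Ces\`aro averages.

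The main work, and the point I expect to be the obstacle, is the matching upper bound. Given $\varepsilon>0$, the Martingale Theorem is invoked: as finite sets $K\uparrow\Phi$ and $L\uparrow G$, the $\sigma$-algebras $\bigvee_{\phi\in K}\phi\alpha\vee\bigvee_{h\in L}h\beta\vee\mathcal{A}$ increase to $\alpha_\Phi\vee\beta_G\vee\mathcal{A}$, so (using finiteness of $\alpha$) the quantity $H_\mu\big(\alpha\,\big|\,\bigvee_{\phi\in K}\phi\alpha\vee\bigvee_{h\in L}h\beta\vee\mathcal{A}\big)$ decreases to $c$; fix $K,L$ making it below $c+\varepsilon$. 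Whenever $g_i\in\bigcap_{\phi\in K}\phi F_n\cap\bigcap_{h\in L}hF_n$, one checks $\{g_ig_j^{-1}:j<i\}\supseteq K$ and $g_iF_n^{-1}\supseteq L$, so the $i$-th conditioning $\sigma$-algebra refines $\bigvee_{\phi\in K}\phi\alpha\vee\bigvee_{h\in L}h\beta\vee\mathcal{A}$ and the summand is $<c+\varepsilon$. By the Følner property \eqref{folner-1} the number of indices \emph{not} of this form is bounded by $\sum_{\phi\in K}|F_n\setminus\phi F_n|+\sum_{h\in L}|F_n\setminus hF_n|=o(|F_n|)$, and each such summand is at most $H_\mu(\alpha)<\infty$, so their contribution to the average is $o(1)$. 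Thus the $\limsup$ of the averages is $\le c+\varepsilon$, and letting $\varepsilon\to0$ completes the identification of the limit as $c$. The two delicate points to carry out carefully are the Følner computation showing that the orbit windows $\{g_ig_j^{-1}:j<i\}$ and $g_iF_n^{-1}$ exhaust $\Phi$ and $G$ for $g_i$ deep inside $F_n$, and the a.e./$L^2$ control in the Martingale Theorem justifying the monotone passage to $c$.
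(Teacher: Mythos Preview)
Your proof is correct and follows essentially the same approach as the paper: split off $h_\mu(G,\beta|\mathcal{A})$ via the chain rule, then decompose the cross term as an ordered sum over $F_n$ and identify its Ces\`aro limit by a two-sided squeeze using the F{\o}lner property and monotone convergence of conditional entropy. The only cosmetic difference is bookkeeping: the paper enumerates $F_n^{-1}$ with the given left order $<_\Phi$ rather than $F_n$ with your auxiliary right order $\blacktriangleleft$, which amounts to the same decomposition under $g\mapsto g^{-1}$ (so your parenthetical remark that $<_\Phi$ ``would fail'' is slightly misleading---the paper does use $<_\Phi$, just on $F_n^{-1}$, obtaining exactly your elements $g_ig_j^{-1}$ in the form $g_{n,i}^{-1}g_{n,j}$).
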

\begin{proof} Let $(F_n)_{n\geq1}$ be a F{\o}lner sequence of $G$.
Denote $F_n^{-1}=\{g_{n,j}\}_{j=1}^{|F_n|}$ such that
$$g_{n,1}<_{\Phi} g_{n,2}\cdots<_{\Phi}g_{n,|F_n|}.$$
Then
\begin{align}\label{eq1}
h_\mu(G,\alpha\vee\beta|\mathcal{A})&=\lim_{n\rightarrow\infty}\frac{1}{|F_n|}H_\mu \big(\bigvee \limits_{g\in F_n}g^{-1}(\alpha\vee\beta)|\mathcal{A}\big)\nonumber\\
&=\lim_{n\rightarrow\infty}\frac{1}{|F_n|}\big(H_\mu \big(\bigvee
\limits_{g\in F_n}g^{-1}\beta|\mathcal{A})+
H_\mu(\bigvee \limits_{g\in F_n}g^{-1}\alpha|\bigvee \limits_{g\in F_n}g^{-1}\beta\vee \mathcal{A})\big)\nonumber\\
&=h_\mu(G,\beta|\mathcal{A})+\lim_{n\rightarrow\infty}\frac{1}{|F_n|}H_\mu(\bigvee
\limits_{g\in F_n}g^{-1}\alpha|\bigvee \limits_{g\in F_n}g^{-1}\beta \vee
\mathcal{A}).
\end{align}
 Note that
\begin{align*}\frac{1}{|F_n|}H_\mu(\bigvee
\limits_{g\in F_n}g^{-1}\alpha|\bigvee \limits_{g\in F_n}g^{-1}\beta \vee
\mathcal{A})&= \frac{1}{|F_n|}\sum_{i=1}^{|F_n|}H_\mu
\big(g_{n,i}\alpha|\bigvee_{j=1}^{i-1}g_{n,j}\alpha
\vee\bigvee_{g\in F_n}g^{-1}\beta \vee \mathcal{A}\big)\\
&=\frac{1}{|F_n|}\sum_{i=1}^{|F_n|}H_\mu
\big(\alpha|\bigvee_{j=1}^{i-1}g_{n,i}^{-1}
g_{n,j}\alpha\vee\bigvee_{g\in F_n}g_{n,i}^{-1}g^{-1}\beta \vee \mathcal{A}\big)\\
&\geq \frac{1}{|F_n|}\sum_{i=1}^{|F_n|}H_\mu(\alpha|\alpha_\Phi\vee\beta_G \vee \mathcal{A})\\
&=H_\mu(\alpha|\alpha_\Phi\vee\beta_G \vee \mathcal{A}).
\end{align*}
Combining this with \eqref{eq1}, we have
\begin{align}\label{eq3}
h_\mu(G,\alpha\vee\beta|\mathcal{A})\geq
h_\mu(G,\beta|\mathcal{A})+H_\mu(\alpha|\alpha_\Phi\vee\beta_G \vee
\mathcal{A}).
\end{align}

Since $(F_n)_{n\ge 1}$ is  a F{\o}lner sequence of $G$, it follows
from \eqref{folner-1} that, for  given $\epsilon>0$ and
$M,L\in F(G)$ with $M\subset\Phi$, $L\subset G$, there exists a
natural number $N$, such that, whenever $n\ge N$,
    $$|\{g\in F_n|M^{-1}g\subset F_n\}|\geq(1-\epsilon)|F_n|\text{ and }|\{g\in F_n|L^{-1}g\subset F_n\}|\geq(1-\epsilon)|F_n|.$$
Thus for each $n\ge N$, there are at least $(1-2\epsilon)|F_n|$
indices $i$  in $\{1,2,\cdots,|F_n|\}$ satisfying $M^{-1}\subseteq
F_n g_{n,i}$ and $L^{-1}\subseteq F_n g_{n,i}$.  Since
$M^{-1}\subseteq \Phi^{-1}$ and $\Phi^{-1}\cap F_n g_{n,i}=\{
g_{n,j}^{-1} g_{n,i}:1\le j\le i-1\}$,  we have that,
for each $n\ge N$, there are at least $(1-2\epsilon)|F_n|$ indices
$i$ in $\{1,2,\cdots,|F_n|\}$ satisfying $M^{-1}\subseteq \bigcup
\limits_{j=1}^{i-1} \{g_{n,j}^{-1}g_{n,i}\}$ and $L^{-1}\subseteq
F_n g_{n,i}$,  or equivalently,  $M\subseteq \bigcup
\limits_{j=1}^{i-1} \{g_{n,i}^{-1}g_{n,j}\}$ and $L\subseteq
g_{n,i}^{-1}F_n^{-1}$.

For every $n\geq N$, we have
\begin{align*}
&\hskip0.5cm \frac{1}{|F_n|}H_\mu(\bigvee \limits_{g\in
F_n}g^{-1}\alpha|\bigvee
\limits_{g\in F_n}g^{-1}\beta \vee \mathcal{A})\\
&=\frac{1}{|F_n|}\sum_{i=1}^{|F_n|}H_\mu(\alpha|\bigvee_{j=1}^{i-1}g_{n,i}^{-1}
g_{n,j}\alpha\vee\bigvee_{g\in F_n}g_{n,i}^{-1}g^{-1}\beta \vee \mathcal{A})\\
&\leq \frac{1}{|F_n|}\big((1-2\epsilon)|F_n|H_\mu(\alpha|\bigvee_{g\in M}g\alpha\vee\bigvee_{g\in L}g\beta \vee \mathcal{A})
+2\epsilon |F_n|H_\mu(\alpha|\mathcal{A})\big)\\
&=(1-2\epsilon)H_\mu(\alpha|\bigvee_{g\in M}g\alpha\vee\bigvee_{g\in
L}g\beta \vee \mathcal{A})+2\epsilon H_\mu(\alpha|\mathcal{A}).
\end{align*}
 By letting $N\rightarrow \infty$ in the above inequality and \eqref{eq1}, we have
$$h_\mu(G,\alpha\vee \beta|\mathcal{A})\le h_\mu(G,\beta|\mathcal{A})+(1-2\epsilon)H_\mu(\alpha|\bigvee_{g\in
M}g\alpha\vee\bigvee_{g\in L}g\beta\vee \mathcal{A})+2\epsilon
H_\mu(\alpha|\mathcal{A}).$$  For fixed $M,L\in F(G)$
with $M\subset\Phi$, $L\subset G$,   letting $\epsilon
\searrow 0$ yields that
$$h_\mu(G,\alpha\vee\beta|\mathcal{A})\leq
h_\mu(G,\beta|\mathcal{A})+H_\mu(\alpha|\bigvee_{g\in
M}g\alpha\vee\bigvee_{g\in L}g\beta \vee \mathcal{A}).$$  Letting
$M\to \Phi$ and $L\to G$ in the above further yields
that
$$h_\mu(G,\alpha\vee\beta|\mathcal{A})\leq
h_\mu(G,\beta|\mathcal{A})+H_\mu(\alpha|\alpha_\Phi\vee\beta_G \vee
\mathcal{A}).$$ Combing this with \eqref{eq3}, we have
$h_\mu(G,\alpha\vee\beta|\mathcal{A})=
h_\mu(G,\beta|\mathcal{A})+H_\mu(\alpha|\alpha_\Phi\vee\beta_G \vee
\mathcal{A})$. This  completes  the proof.
\end{proof}

Let $(X,G)$ be a $G$-system and $\mu\in
\mathcal{M}(X,G)$ be as in the theorem above, it is already known
that $h_\mu(G,\alpha)=H_\mu(\alpha|\alpha_\Phi)$,  $\alpha \in
\mathcal{P}^\mu_X$ (see \cite{P,AV}).   The Pinsker formula is also
shown in \cite{C} for the case $G=\mathbb{Z}^d$ and in \cite[Theorem
4.1]{GS} when $G$ is a  finitely-generated, nilpotent group.
Moreover, an entropy addition formula for amenable groups is
obtained in \cite[Lemma 4.1]{WZ} and \cite[Lemma 1.1]{GTW}.

As an application of Theorem \ref{lm1}, we have the following result which will be used in the proof of our main results.
\begin{Proposition} \label{prop3.1} Let $G$ be a countable infinite amenable group with the algebraic past
$\Phi$ and $f_n\nearrow \infty$ w.r.t $\Phi$ with $f_n\Phi
f_n^{-1}=\Phi$ for each $n\ge 1$.  Also let $(X,G)$
 be a $G$-system, $\mu\in \mathcal{M}(X,G)$,
$\mathcal{A}$  be  a $G$-invariant sub-$\sigma$-algebra
of $\mathcal{B}_X^\mu$,  and $\alpha,\beta,\gamma\in
\mathcal{P}_X^\mu$ with $\alpha\preceq\beta$. Then
$$\lim \limits_{n\rightarrow\infty} H_\mu(\alpha|\beta_\Phi\vee (f_n^{-1}\gamma)_\Phi\vee \mathcal{A})=H_\mu(\alpha|\beta_\Phi\vee \mathcal{A}).$$
\end{Proposition}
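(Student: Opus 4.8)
The plan is to combine the monotonicity of the conditioning algebras with the Pinsker formula (Theorem~\ref{lm1}) and the Martingale Theorem, reducing first to the case $\alpha=\beta$ and then bootstrapping. Write $\mathcal{G}_n=\beta_\Phi\vee(f_n^{-1}\gamma)_\Phi\vee\mathcal{A}$. First I would record that $(f_n^{-1}\gamma)_\Phi$ is \emph{decreasing} in $n$: since $f_n<_\Phi f_{n+1}$ gives $f_{n+1}^{-1}f_n\in\Phi$, each generator $g f_{n+1}^{-1}\gamma$ with $g\in\Phi$ equals $(g f_{n+1}^{-1}f_n)f_n^{-1}\gamma$ with $g f_{n+1}^{-1}f_n\in\Phi\cdot\Phi\subseteq\Phi$, whence $(f_{n+1}^{-1}\gamma)_\Phi\subseteq(f_n^{-1}\gamma)_\Phi$. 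Thus $\mathcal{G}_n$ decreases, so $H_\mu(\alpha\mid\mathcal{G}_n)$ increases and converges, and $\mathcal{G}_n\supseteq\beta_\Phi\vee\mathcal{A}$ gives the easy bound $\lim_n H_\mu(\alpha\mid\mathcal{G}_n)\le H_\mu(\alpha\mid\beta_\Phi\vee\mathcal{A})$. \textbf{The main obstacle is the reverse inequality}: the tail $\bigcap_n(f_n^{-1}\gamma)_\Phi$ is a ``remote past'' that need not be trivial, so one cannot directly identify $\bigcap_n\mathcal{G}_n$ with $\beta_\Phi\vee\mathcal{A}$. The device that circumvents this is to push forward by $f_n$, turning the moving partition $f_n^{-1}\gamma$ into the fixed $\gamma$ conditioned on an \emph{increasing} past, where the Martingale Theorem applies cleanly.

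For the case $\alpha=\beta$, put $\zeta_n=\beta\vee f_n^{-1}\gamma$, so that $(\zeta_n)_\Phi\vee\mathcal{A}=\mathcal{G}_n$. The ``in particular'' part of Theorem~\ref{lm1} gives $H_\mu(\zeta_n\mid\mathcal{G}_n)=h_\mu(G,\zeta_n\mid\mathcal{A})$, and the chain rule yields
\[
H_\mu(\beta\mid\mathcal{G}_n)=h_\mu(G,\beta\vee f_n^{-1}\gamma\mid\mathcal{A})-H_\mu(f_n^{-1}\gamma\mid\beta\vee\mathcal{G}_n).
\]
By the addition formula of Theorem~\ref{lm1} applied to $\beta$ and $f_n^{-1}\gamma$, and then applying the measure-preserving $f_n$ — using $f_n\Phi f_n^{-1}=\Phi$ (so $f_n(f_n^{-1}\gamma)_\Phi=\bigvee_{g\in\Phi}f_ngf_n^{-1}\gamma=\gamma_\Phi$), together with $f_n\beta_G=\beta_G$ and $f_n\mathcal{A}=\mathcal{A}$ — the first term is independent of $n$:
\[
h_\mu(G,\beta\vee f_n^{-1}\gamma\mid\mathcal{A})=h_\mu(G,\beta\mid\mathcal{A})+H_\mu(\gamma\mid\beta_G\vee\gamma_\Phi\vee\mathcal{A}).
\]

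For the second term, pushing forward by $f_n$ and using $\beta\vee\beta_\Phi=\bigvee_{g\in\Phi\cup\{e_G\}}g\beta$ converts it to $H_\mu(\gamma\mid \bigvee_{h\le_\Phi f_n}h\beta\vee\gamma_\Phi\vee\mathcal{A})$, whose conditioning algebra \emph{increases} to $\beta_G\vee\gamma_\Phi\vee\mathcal{A}$ as $f_n\nearrow\infty$; the increasing Martingale Theorem (with the boundedness of $t\mapsto -t\log t$) then gives the limit $H_\mu(\gamma\mid\beta_G\vee\gamma_\Phi\vee\mathcal{A})$. The two occurrences of $H_\mu(\gamma\mid\beta_G\vee\gamma_\Phi\vee\mathcal{A})$ cancel, leaving $\lim_n H_\mu(\beta\mid\mathcal{G}_n)=h_\mu(G,\beta\mid\mathcal{A})=H_\mu(\beta\mid\beta_\Phi\vee\mathcal{A})$, the last equality again by Theorem~\ref{lm1}.

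For general $\alpha\preceq\beta$ I would run a sandwich argument. The chain rule gives $H_\mu(\beta\mid\mathcal{G}_n)=H_\mu(\alpha\mid\mathcal{G}_n)+H_\mu(\beta\mid\alpha\vee\mathcal{G}_n)$, and both summands are monotone in $n$, hence converge; passing to the limit and using the $\alpha=\beta$ case for the left-hand side,
\[
H_\mu(\beta\mid\beta_\Phi\vee\mathcal{A})=\lim_n H_\mu(\alpha\mid\mathcal{G}_n)+\lim_n H_\mu(\beta\mid\alpha\vee\mathcal{G}_n).
\]
Each limit is bounded above by its $\gamma$-free counterpart, $\lim_n H_\mu(\alpha\mid\mathcal{G}_n)\le H_\mu(\alpha\mid\beta_\Phi\vee\mathcal{A})$ and $\lim_n H_\mu(\beta\mid\alpha\vee\mathcal{G}_n)\le H_\mu(\beta\mid\alpha\vee\beta_\Phi\vee\mathcal{A})$, while the chain rule shows these two bounds sum to exactly $H_\mu(\beta\mid\beta_\Phi\vee\mathcal{A})$. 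Hence both inequalities are equalities, giving $\lim_n H_\mu(\alpha\mid\mathcal{G}_n)=H_\mu(\alpha\mid\beta_\Phi\vee\mathcal{A})$, as asserted. Throughout I assume the partitions have finite entropy, so that all conditional entropies are finite and the chain-rule manipulations are legitimate; the only genuinely delicate point is the cancellation in the $\alpha=\beta$ step, which is precisely where replacing the moving partition $f_n^{-1}\gamma$ by the fixed $\gamma$ with an increasing conditioning algebra renders the tail harmless.
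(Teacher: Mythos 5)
Your proof is correct and follows essentially the same route as the paper: the same chain-rule/Pinsker-formula identity for $H_\mu(\beta\mid\beta_\Phi\vee(f_n^{-1}\gamma)_\Phi\vee\mathcal{A})$, the same push-forward by $f_n$ using $f_n\Phi f_n^{-1}=\Phi$ to turn the moving partition into $\gamma$ with an increasing past, the Martingale Theorem for the cancellation, and the same reduction of general $\alpha\preceq\beta$ to the case $\alpha=\beta$ by a squeeze. The only cosmetic differences are that you record the monotonicity of $(f_n^{-1}\gamma)_\Phi$ so that the limits exist outright, where the paper works with $\liminf$ and $\limsup$; your finite-entropy caveat is automatic since partitions in $\mathcal{P}_X^\mu$ are finite by the paper's conventions.
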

\begin{proof}
 It follows from Theorem \ref{lm1} and  the fact $f_n\Phi f_n^{-1}=\Phi$ that
\begin{align}\label{eq-17}
&\hskip0.5cm H_\mu(\beta|\beta_\Phi\vee (f_n^{-1}\gamma)_\Phi\vee
\mathcal{A})\nonumber\\
& =H_\mu(\beta\vee f_n^{-1}\gamma|\beta_\Phi\vee
(f_n^{-1}\gamma)_\Phi\vee \mathcal{A})
-H_\mu(f_n^{-1}\gamma|\beta\vee\beta_\Phi\vee (f_n^{-1}\gamma)_\Phi\vee \mathcal{A})\nonumber\\
&=h_\mu(G,\beta\vee f_n^{-1}\gamma|\mathcal{A})-H_\mu(f_n^{-1}\gamma|\beta\vee\beta_\Phi\vee (f_n^{-1}\gamma)_\Phi\vee \mathcal{A})\nonumber\\
&=H_\mu(\beta|\beta_\Phi\vee \mathcal{A})+H_\mu(f_n^{-1}\gamma|\beta_G\vee (f_n^{-1}\gamma)_\Phi\vee \mathcal{A})
-H_\mu(f_n^{-1}\gamma|\beta\vee\beta_\Phi\vee (f_n^{-1}\gamma)_\Phi\vee \mathcal{A})\nonumber\\
&=H_\mu(\beta|\beta_\Phi\vee \mathcal{A})+H_\mu(\gamma|\beta_G\vee
f_n(f_n^{-1}\gamma)_{\Phi}\vee \mathcal{A})-H_\mu(\gamma|f_n\beta\vee f_n\beta_\Phi\vee
f_n(f_n^{-1}\gamma)_{\Phi}\vee \mathcal{A})\nonumber \\
&=H_\mu(\beta|\beta_\Phi\vee \mathcal{A})+H_\mu(\gamma|\beta_G\vee
\gamma_{\Phi }\vee \mathcal{A})-H_\mu(\gamma|f_n\beta\vee
f_n\beta_\Phi\vee \gamma_{\Phi }\vee \mathcal{A}).
\end{align}
Since  $f_n\nearrow \infty$ w.r.t $\Phi$, $f_n\beta
\vee f_n\beta_\Phi\nearrow \beta_G$ as $n\rightarrow \infty$.
Moreover, $\lim_{n\rightarrow\infty}H_\mu(\gamma|f_n\beta\vee
f_n\beta_\Phi\vee \gamma_\Phi\vee
\mathcal{A})=H_\mu(\gamma|\beta_G\vee \gamma_\Phi\vee \mathcal{A})$
by the Martingale Theorem. Thus,  by
letting $n\rightarrow\infty$ in \eqref{eq-17}, we have
\begin{align*}
&\hskip0.5cm\lim_{n\rightarrow\infty}H_\mu(\beta|\beta_\Phi\vee
(f_n^{-1}\gamma)_\Phi\vee \mathcal{A})\\
&=\lim_{n\rightarrow\infty}H_\mu(\beta|\beta_\Phi\vee \mathcal{A})
+H_\mu(\gamma|\beta_G\vee \gamma_\Phi\vee \mathcal{A})
-H_\mu(\gamma|f_n\beta\vee f_n\beta_\Phi\vee \gamma_\Phi\vee \mathcal{A})\nonumber\\
&=H_\mu(\beta|\beta_\Phi\vee \mathcal{A}).
\end{align*}
Combing this with the identity
\begin{align*}
 H_\mu(\alpha|\beta_\Phi\vee (f_n^{-1}\gamma)_\Phi\vee \mathcal{A})=H_\mu(\beta|\beta_\Phi\vee (f_n^{-1}\gamma)_\Phi\vee \mathcal{A})
 -H_\mu(\beta|\alpha\vee\beta_\Phi\vee (f_n^{-1}\gamma)_\Phi\vee \mathcal{A}),
\end{align*} we have
\begin{align*}
\liminf_{n\rightarrow\infty}H_\mu(\alpha|\beta_\Phi\vee
(f_n^{-1}\gamma)_\Phi\vee \mathcal{A})&\geq
\liminf_{n\rightarrow\infty} H_\mu(\beta|\beta_\Phi\vee
(f_n^{-1}\gamma)_\Phi\vee \mathcal{A})
-H_\mu(\beta|\alpha\vee\beta_\Phi\vee \mathcal{A})\\
&= H_\mu(\beta|\beta_\Phi\vee \mathcal{A})-H_\mu(\beta|\alpha\vee\beta_\Phi\vee \mathcal{A})\\
&=H_\mu(\alpha|\beta_\Phi\vee \mathcal{A}).
\end{align*}
Since
$\limsup_{n\rightarrow\infty}H_\mu(\alpha|\beta_\Phi\vee
(f_n^{-1}\gamma)_\Phi\vee \mathcal{A})\leq H_\mu(\alpha|\beta_\Phi\vee
\mathcal{A})$,
$$\lim_{n\rightarrow\infty}H_\mu(\alpha|\beta_\Phi\vee
(f_n^{-1}\gamma)_\Phi\vee \mathcal{A})= H_\mu(\alpha|\beta_\Phi\vee
\mathcal{A}).$$ This  completes the proof.
\end{proof}

\section{Pinsker $\sigma$-algebra}
In this section, we will introduce Pinsker $\sigma$-algebra and
investigate some of its basic properties
to be used in the proof of our main results.  Let $G$ be a
countable infinite amenable group and $(X,G)$ be a $G$-system.  For
$\mu\in \mathcal{M}(X,G)$ and  a $G$-invariant sub-$\sigma$-algebra
$\mathcal{A}$ of $\mathcal{B}_X^\mu$, denote
$$P_\mu(G|\mathcal{A})=\{ A\in \mathcal{B}_X^\mu:
h_\mu(G,\{A,X\setminus A\}|\mathcal{A})=0 \}.$$  It
follows from Theorem \ref{lm1} or \cite[Lemma 1.1]{GTW} that
$P_\mu(G|\mathcal{A})$  must be a $G$-invariant
sub-$\sigma$-algebra of $\mathcal{B}_X^\mu$ containing
$\mathcal{A}$. We call this $\sigma$-algebra the {\it Pinsker
$\sigma$-algebra} of the system $(X,\mathcal{B}_X^\mu,\mu,G)$
relative to $\mathcal{A}$. We simply refer to the
Pinsker $\sigma$-algebra of  $(X,\mathcal{B}_X^\mu,\mu,G)$ relative
to the trivial algebra as the {\it Pinsker $\sigma$-algebra},
 denoted  by $P_\mu(G)$.

Descriptions of Pinsker algebras are already given in
\cite{C} for $\mathbb{Z}^d$-actions  and in \cite{GS} for actions of
a finitely-generated nilpotent group.    Also,  some new properties
of Pinsker algebras related to disjointness and quasi-factors are
discovered in \cite{GTW}, followed by  a relativized version given
in  \cite{D} in which  a notion of entropy for cocycles is
introduced.

The following result is well-known. For  the sake of
completeness, we  provide  a proof  below.
\begin{Lemma}\label{lem-pinsker} Let  $G$ be a countable infinite amenable group,  $(X,G)$ be a $G$-system,  and $\mu\in
\mathcal{M}(X,G)$.  Then the Pinsker $\sigma$-algebra of the system
$(X,\mathcal{B}_X^\mu,\mu,G)$ relative to $P_\mu(G)$
agrees with $P_\mu(G)$, i.e.,
$$P_\mu(G|P_\mu(G))=P_\mu(G).$$
\end{Lemma}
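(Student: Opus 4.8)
The plan is to prove the two inclusions separately. The inclusion $P_\mu(G)\subseteq P_\mu(G|P_\mu(G))$ is immediate from the fact recalled just above: for any $G$-invariant sub-$\sigma$-algebra $\mathcal{A}$ of $\mathcal{B}_X^\mu$, the family $P_\mu(G|\mathcal{A})$ is a $G$-invariant sub-$\sigma$-algebra \emph{containing} $\mathcal{A}$, so taking $\mathcal{A}=P_\mu(G)$ gives $P_\mu(G|P_\mu(G))\supseteq P_\mu(G)$. The substance is therefore the reverse inclusion $P_\mu(G|P_\mu(G))\subseteq P_\mu(G)$. Unwinding the definitions, this means that for every $A\in\mathcal{B}_X^\mu$ with $h_\mu(G,\{A,X\setminus A\}\,|\,P_\mu(G))=0$ one must have $h_\mu(G,\{A,X\setminus A\})=0$. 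Writing $\alpha=\{A,X\setminus A\}$, it thus suffices to establish the single inequality
$$h_\mu(G,\alpha)\le h_\mu(G,\alpha\,|\,P_\mu(G))$$
for every finite partition $\alpha\in\mathcal{P}_X^\mu$, since then $h_\mu(G,\alpha\,|\,P_\mu(G))=0$ forces $h_\mu(G,\alpha)=0$, i.e. $A\in P_\mu(G)$.

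To prove this inequality I would approximate $P_\mu(G)$ from inside by finite partitions. Since $(X,\mathcal{B}_X^\mu,\mu)$ is a Lebesgue system and $P_\mu(G)=\widehat{\bigvee_i\gamma_i}$ for suitable finite partitions $\gamma_i$, setting $\beta_k=\bigvee_{i\le k}\gamma_i$ produces an increasing sequence of finite partitions with $\widehat{\beta_k}\subseteq P_\mu(G)$ and $\widehat{\beta_k}\nearrow P_\mu(G)$ (mod $\mu$). Because each atom of $\beta_k$ lies in $P_\mu(G)$ and $\beta_k$ is the finite join of the two-set partitions its atoms generate, subadditivity of $h_\mu(G,\cdot)$ gives $h_\mu(G,\beta_k)=0$. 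Now apply the Pinsker formula of Theorem \ref{lm1} with $\mathcal{A}=\mathcal{N}$ to the pair $\alpha,\beta_k$:
$$h_\mu(G,\alpha\vee\beta_k)=h_\mu(G,\beta_k)+H_\mu(\alpha\,|\,(\beta_k)_G\vee\alpha_\Phi)=H_\mu(\alpha\,|\,(\beta_k)_G\vee\alpha_\Phi).$$
Combining this with the monotonicity $h_\mu(G,\alpha)\le h_\mu(G,\alpha\vee\beta_k)$ yields $h_\mu(G,\alpha)\le H_\mu(\alpha\,|\,(\beta_k)_G\vee\alpha_\Phi)$ for every $k$.

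Finally I would let $k\to\infty$. Since $P_\mu(G)$ is $G$-invariant and $\widehat{\beta_k}\nearrow P_\mu(G)$, one checks $(\beta_k)_G=\bigvee_{g\in G}g\beta_k\nearrow\bigvee_{g\in G}gP_\mu(G)=P_\mu(G)$, whence $(\beta_k)_G\vee\alpha_\Phi\nearrow P_\mu(G)\vee\alpha_\Phi$. The Martingale Theorem then gives $\mathbb{E}_\mu(1_A\,|\,(\beta_k)_G\vee\alpha_\Phi)\to\mathbb{E}_\mu(1_A\,|\,P_\mu(G)\vee\alpha_\Phi)$ a.e. for each atom $A$ of $\alpha$, and since $t\mapsto -t\log t$ is bounded and continuous on $[0,1]$, dominated convergence upgrades this to $H_\mu(\alpha\,|\,(\beta_k)_G\vee\alpha_\Phi)\to H_\mu(\alpha\,|\,P_\mu(G)\vee\alpha_\Phi)$. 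The ``in particular'' clause of Theorem \ref{lm1}, applied with the $G$-invariant algebra $\mathcal{A}=P_\mu(G)$, identifies the limit as $H_\mu(\alpha\,|\,\alpha_\Phi\vee P_\mu(G))=h_\mu(G,\alpha\,|\,P_\mu(G))$, giving the required inequality and hence the theorem. I expect this passage to the limit — verifying $(\beta_k)_G\vee\alpha_\Phi\nearrow P_\mu(G)\vee\alpha_\Phi$ and justifying the continuity of conditional entropy along this increasing sequence via the Martingale Theorem — to be the only delicate point; everything else is a direct application of the Pinsker formula together with the standard subadditivity and monotonicity of $h_\mu(G,\cdot)$.
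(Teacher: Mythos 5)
Your argument has the same skeleton as the paper's: both inclusions are handled the same way, and the substantive one is proved by approximating $P_\mu(G)$ from inside by an increasing sequence of finite partitions $\beta_k$ with $h_\mu(G,\beta_k)=0$, invoking an entropy addition formula, and passing to the limit via the Martingale Theorem. The difference is in which addition formula you use, and this is where there is a real (if quiet) gap: you invoke Theorem \ref{lm1}, whose hypotheses include that $G$ carries an algebraic past $\Phi$ (i.e.\ is left-orderable), and your limit identity $H_\mu(\alpha\,|\,\alpha_\Phi\vee P_\mu(G))=h_\mu(G,\alpha\,|\,P_\mu(G))$ again rests on that theorem. But the lemma is stated for an arbitrary countable infinite amenable group, and not every such group is left-orderable (any amenable group with torsion fails). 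So as written your proof only establishes the lemma under an extra structural hypothesis on $G$. The paper avoids this by staying at the level of dynamical conditional entropies: it writes $h_\mu(G,\alpha\,|\,P_\mu(G))=\lim_k h_\mu(G,\alpha\,|\,(\beta_k)_G)$ and uses the Abramov--Rokhlin--type addition formula $h_\mu(G,\alpha\,|\,(\beta_k)_G)=h_\mu(G,\alpha\vee\beta_k)-h_\mu(G,\beta_k)$ from \cite[Lemma 1.1(1)]{GTW} (see also \cite{WZ}), which is valid for all amenable groups and requires no ordering.

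Apart from this, your steps are sound: $h_\mu(G,\beta_k)=0$ by subadditivity, $(\beta_k)_G\vee\alpha_\Phi\nearrow P_\mu(G)\vee\alpha_\Phi$ because $P_\mu(G)$ is $G$-invariant, and the continuity of $H_\mu(\alpha\,|\,\cdot)$ along an increasing sequence of $\sigma$-algebras follows from the Martingale Theorem and bounded convergence exactly as you say. Since every group treated in the paper's main theorems does have an algebraic past, your proof suffices for all the applications here; but to prove the lemma in the generality in which it is stated you should replace the appeal to Theorem \ref{lm1} by the general amenable addition formula, after which your argument essentially coincides with the paper's.
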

\begin{proof} It is clear that $P_\mu(G|P_\mu(G))\supseteq P_\mu(G)$. Now we take an increasing
sequence of finite Borel partitions $(\beta_n)_{n\ge 1}$ such that
$\widehat{\beta_n}\nearrow P_\mu(G)$  as $n\rightarrow
+\infty$.  For a given $\alpha\in \mathcal{P}_X^\mu$, we
have by  \cite[Equation ($^*$) \& Lemma 1.1(1)]{GTW} and the Martingale Theorem that
\begin{align*}
h_\mu(G,\alpha)&\ge h_\mu(G,\alpha|P_\mu(G))=\lim_{n\rightarrow +\infty} h_\mu(G,\alpha| (\beta_n)_G)\\
&=\lim_{n\rightarrow +\infty} \big(h_\mu(G,\alpha\vee \beta_n)-h_\mu(G,\beta_n)\big)\\
&=\lim_{n\rightarrow +\infty} h_\mu(G,\alpha\vee \beta_n)\\
&\ge h_\mu(G,\alpha).
\end{align*}
Thus $h_\mu(G,\alpha|P_\mu(G))=h_\mu(G,\alpha)$. This implies $P_\mu(G|P_\mu(G))=P_\mu(G)$.
\end{proof}

 With Lemma \ref{lem-pinsker}, the following result
follows from Theorem 0.4(iii) in \cite{D} (see also Theorem 4 in \cite{GTW} for free action).
\begin{Lemma} \label{lem-GTW} Let  $G$ be a countable infinite amenable group, $(X,G)$ be a $G$-system, and $\mu\in \mathcal{M}^e(X,G)$.
If $\lambda=\mu\times_{P_\mu(G)}\mu$, and $\pi: X\times X\rightarrow
X$ is the  canonical projection to the first  factor,
then $P_\lambda(G|\pi^{-1}(P_\mu(G)))=\pi^{-1}(P_\mu(G))$ (mod
$\lambda$).
\end{Lemma}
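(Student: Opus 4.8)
The plan is to prove the two inclusions making up the identity $P_\lambda(G|\pi^{-1}(P_\mu(G)))=\pi^{-1}(P_\mu(G))$ (mod $\lambda$), the inclusion ``$\supseteq$'' being formal and the inclusion ``$\subseteq$'' being reduced, via Lemma~\ref{lem-pinsker}, to the description of the Pinsker $\sigma$-algebra of a relatively independent square. The inclusion $\pi^{-1}(P_\mu(G))\subseteq P_\lambda(G|\pi^{-1}(P_\mu(G)))$ is immediate: if $E\in\pi^{-1}(P_\mu(G))$ then the partition $\{E,(X\times X)\setminus E\}$ is $\pi^{-1}(P_\mu(G))$-measurable, so its conditional entropy over $\pi^{-1}(P_\mu(G))$ vanishes. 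Before addressing the converse I would record two preliminary facts. First, $\lambda$ is $G$-invariant: by \eqref{eq-cdeq} and the $G$-invariance of $P_\mu(G)$ one has $g\lambda=\mu\times_{gP_\mu(G)}\mu=\mu\times_{P_\mu(G)}\mu=\lambda$ for every $g\in G$, so that $\lambda\in\mathcal{M}(X\times X,G)$ and Lemma~\ref{lem-pinsker} may be applied to the $G$-system $(X\times X,G)$ with the measure $\lambda$. Second, writing $\pi_1,\pi_2$ for the two coordinate projections, formula \eqref{ris} gives $\lambda(P\times(X\setminus P))=\int_X \mathbb{E}_\mu(1_P|P_\mu(G))\,\mathbb{E}_\mu(1_{X\setminus P}|P_\mu(G))\,d\mu=0$ (and likewise $\lambda((X\setminus P)\times P)=0$) for every $P\in P_\mu(G)$, since $\mathbb{E}_\mu(1_P|P_\mu(G))=1_P$ a.e.; hence $\pi_1^{-1}(P)=\pi_2^{-1}(P)$ (mod $\lambda$) for all such $P$, i.e. $\pi_1^{-1}(P_\mu(G))=\pi_2^{-1}(P_\mu(G))=\pi^{-1}(P_\mu(G))$ (mod $\lambda$).

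The heart of the argument is the identification of the (absolute) Pinsker $\sigma$-algebra of the square, $P_\lambda(G)=\pi^{-1}(P_\mu(G))$ (mod $\lambda$), equivalently $P_\lambda(G)=\pi_1^{-1}(P_\mu(G))\vee\pi_2^{-1}(P_\mu(G))$ by the second preliminary fact. This is precisely the computation of the Pinsker algebra of the relatively independent self-joining of a system over its Pinsker factor, supplied by \cite[Theorem 0.4(iii)]{D} in the present generality (and by \cite[Theorem 4]{GTW} in the free case). Conceptually, the point is that under $\lambda$ the two coordinate $\sigma$-algebras $\pi_1^{-1}(\mathcal{B}_X)$ and $\pi_2^{-1}(\mathcal{B}_X)$ are relatively independent over the common factor $\pi^{-1}(P_\mu(G))$, while each coordinate, being a copy of $(X,\mu)$, carries no zero-entropy part above its own Pinsker factor by Lemma~\ref{lem-pinsker} (relative complete positivity of entropy over the Pinsker factor); a relatively independent joining of two extensions that are relatively CPE over a common base is again relatively CPE over that base, so the square admits no zero-entropy enlargement of $\pi^{-1}(P_\mu(G))$, whence $P_\lambda(G)=\pi^{-1}(P_\mu(G))$.

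With this identification in hand the proof concludes quickly: applying Lemma~\ref{lem-pinsker} to the $G$-system $(X\times X,G)$ equipped with the invariant measure $\lambda$ yields $P_\lambda(G|P_\lambda(G))=P_\lambda(G)$, and substituting $P_\lambda(G)=\pi^{-1}(P_\mu(G))$ gives $P_\lambda(G|\pi^{-1}(P_\mu(G)))=\pi^{-1}(P_\mu(G))$, as required. I expect the only genuine obstacle to be the identification $P_\lambda(G)=\pi^{-1}(P_\mu(G))$: the inclusion ``$\supseteq$'' there is automatic, but the inclusion ``$\subseteq$'' — that relative independence over the common Pinsker fiber together with relative complete positivity of entropy leaves no additional zero-entropy sets — is exactly the delicate content of \cite{D,GTW}, and it is the non-free amenable setting (rather than the free-action case) that forces the use of Danilenko's orbital argument in place of the more elementary joining computation.
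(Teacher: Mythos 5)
Your proposal is correct and takes essentially the same route as the paper: the paper offers no independent proof of this lemma, deriving it directly from Lemma~\ref{lem-pinsker} together with Theorem~0.4(iii) of \cite{D} (Theorem~4 of \cite{GTW} in the free case), which is exactly the citation that carries the weight of your argument. The extra reductions you supply --- the formal inclusion, the $G$-invariance of $\lambda$ via \eqref{eq-cdeq}, and the identification of the two coordinate copies of $P_\mu(G)$ (mod $\lambda$) --- are correct bookkeeping around that same core.
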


\begin{Lemma}\label{bl} Let  $G$ be a countable infinite amenable group,
$(X,G)$ be a $G$-system, and $\mu\in \mathcal{M}^e(X,G)$.
Denote $\lambda=\mu\times_{P_\mu(G)}\mu$ and
$\Delta_X=\{(x,x):x\in X\}$.  Then the following holds:
\begin{itemize}
\item[{\rm 1)}] $\lambda\in \mathcal{M}^e(X\times X,G)$;

\item[{\rm 2)}] If $h_\mu(G)>0$, then $\lambda(\Delta_X)=0$.
\end{itemize}
\end{Lemma}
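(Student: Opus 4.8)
The plan is to prove the ergodicity in 1) first, since 2) will then follow from it with little extra work. The $G$-invariance of $\lambda$ is immediate: because $P_\mu(G)$ is a $G$-invariant sub-$\sigma$-algebra, \eqref{eq-cdeq} gives $g\lambda=\mu\times_{gP_\mu(G)}\mu=\mu\times_{P_\mu(G)}\mu=\lambda$ for every $g\in G$. To prove ergodicity, let $\mathcal{I}_\lambda$ denote the $\sigma$-algebra of $G$-invariant Borel sets of $(X\times X,\lambda)$ and let $\pi$ be the projection to the first factor. The key step I would carry out is to show $\mathcal{I}_\lambda\subseteq\pi^{-1}(P_\mu(G))$ (mod $\lambda$). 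Indeed, for any $E\in\mathcal{I}_\lambda$ the two-element partition $\xi=\{E,(X\times X)\setminus E\}$ is $G$-invariant, so $\bigvee_{g\in F_n}g^{-1}\xi=\xi$, and since $H_\lambda(\xi\,|\,\pi^{-1}(P_\mu(G)))\le\log 2$ while $|F_n|\to\infty$ we get $h_\lambda(G,\xi\,|\,\pi^{-1}(P_\mu(G)))=0$; that is, $E\in P_\lambda(G\,|\,\pi^{-1}(P_\mu(G)))$. Now Lemma \ref{lem-GTW} identifies this relative Pinsker $\sigma$-algebra with $\pi^{-1}(P_\mu(G))$ (mod $\lambda$), giving the inclusion. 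This is the heart of the matter and the single genuine obstacle: it is precisely where Lemma \ref{lem-GTW} is indispensable.

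Once $\mathcal{I}_\lambda\subseteq\pi^{-1}(P_\mu(G))$ is known, the conclusion of 1) is routine. Any $E\in\mathcal{I}_\lambda$ has the form $E=\pi^{-1}(A)$ (mod $\lambda$) with $A\in P_\mu(G)\subseteq\mathcal{B}_X^\mu$. Since $\pi$ is $G$-equivariant and $\pi_*\lambda=\mu$, the invariance $gE=E$ forces $\mu(gA\triangle A)=0$ for every $g\in G$, so ergodicity of $\mu$ yields $\mu(A)\in\{0,1\}$ and hence $\lambda(E)=\mu(A)\in\{0,1\}$. Therefore $\lambda\in\mathcal{M}^e(X\times X,G)$.

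For 2), first note that $\Delta_X$ is a closed, hence Borel, subset of $X\times X$ that is invariant under the diagonal action, so by 1) we have $\lambda(\Delta_X)\in\{0,1\}$ and it suffices to exclude $\lambda(\Delta_X)=1$. Disintegrating $\mu=\int_X\mu_x\,d\mu(x)$ over $P_\mu(G)$, formula \eqref{ris} extends (by a monotone class argument) from product sets to all Borel sets, giving $\lambda(\Delta_X)=\int_X(\mu_x\times\mu_x)(\Delta_X)\,d\mu(x)$, where $(\mu_x\times\mu_x)(\Delta_X)=\int_X\mu_x(\{y\})\,d\mu_x(y)\le 1$ with equality if and only if $\mu_x$ is a point mass. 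Hence $\lambda(\Delta_X)=1$ would force $\mu_x$ to be a Dirac measure for $\mu$-a.e. $x$, so by \eqref{meas3} we would have $\mathbb{E}_\mu(1_A\,|\,P_\mu(G))=\mu_x(A)\in\{0,1\}$ for every $A\in\mathcal{B}_X$, whence $1_A$ is $P_\mu(G)$-measurable and $P_\mu(G)=\mathcal{B}_X^\mu$ (mod $\mu$).

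It remains to see that this contradicts $h_\mu(G)>0$. When $P_\mu(G)=\mathcal{B}_X^\mu$ (mod $\mu$), every $\alpha\in\mathcal{P}_X^b$ together with all its translates $g^{-1}\alpha$ is $P_\mu(G)$-measurable (using the $G$-invariance of $P_\mu(G)$), so $h_\mu(G,\alpha\,|\,P_\mu(G))=0$; combining this with the identity $h_\mu(G,\alpha)=h_\mu(G,\alpha\,|\,P_\mu(G))$ established in the proof of Lemma \ref{lem-pinsker} gives $h_\mu(G,\alpha)=0$ for all $\alpha$, i.e. $h_\mu(G)=0$, a contradiction. Therefore $\lambda(\Delta_X)=0$. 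Everything after the ergodicity step is bookkeeping with the disintegration and the entropy identities already proved, so the only delicate point in the whole argument is the reduction $\mathcal{I}_\lambda\subseteq\pi^{-1}(P_\mu(G))$ in 1).
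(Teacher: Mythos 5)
Your proof is correct and follows essentially the same route as the paper's: part 1) rests, exactly as in the paper, on Lemma \ref{lem-GTW} to place $G$-invariant sets inside $\pi^{-1}(P_\mu(G))$ (mod $\lambda$) and then on the ergodicity of $\mu$. For part 2) you derive $P_\mu(G)=\mathcal{B}_X^\mu$ from $\lambda(\Delta_X)=1$ via the disintegration and the atomicity of the fibre measures $\mu_x$, whereas the paper computes $\lambda(A\times(X\setminus A))=0$ directly from \eqref{ris}; these are equivalent, and the final contradiction with $h_\mu(G)>0$ is reached identically.
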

\begin{proof} 1) Let $A$ be a Borel subset of $X\times X$ and $B:=\bigcup_{g\in G}gA$. Then $gB=B$ for any $g\in G$ and thus
$h_\lambda(G, \{B,X\times X\setminus B\})=0$, i.e., $B\in
P_\lambda(G)$. Let $\pi: X\times X\rightarrow X$ be the  canonical
projection to the first  factor. By Lemma
\ref{lem-GTW}, we have
$P_\lambda(G|\pi^{-1}(P_\mu(G)))=\pi^{-1}(P_\mu(G))$ (mod
$\lambda$). Thus $P_\lambda(G)\subseteq
P_\lambda(G|\pi^{-1}(P_\mu(G)))=\pi^{-1}(P_\mu(G))$ (mod $\lambda$).
Let  $C\in P_\mu(G)$ be such that $B=\pi^{-1}(C)$ (mod
$\lambda$).  Using the fact that $gB=B$, $g\in G$, we
have $\lambda(g\pi^{-1}(C)\Delta \pi^{-1}C)=0$, $g\in G$. Moreover,
$$\mu(gC\Delta C)=\lambda(\pi^{-1}(gC\Delta C))=\lambda(g\pi^{-1}(C)\Delta \pi^{-1}C)=0,\;\;\, g\in G.
$$
Since $\mu$ is ergodic, $\mu(C)=0$ or $1$. Thus
$\lambda(B)=\mu(C)=0$ or $1$,  i.e.,  $\lambda$ is
ergodic.

2) Assume that $h_\mu(G)>0$. If $\lambda(\Delta_X)>0$, then
$\lambda(\Delta_X)=1$ since $\lambda$ is ergodic and $\Delta_X$ is
$G$-invariant. Now for any $A\in \mathcal{B}_X$,  we have
$$0=\lambda(A\times (X\setminus A))=\int_X \mathbb{E}_\mu(1_A|P_\mu(G))(x) \mathbb{E}_\mu(1_{X\setminus A}|P_\mu(G))(x) d\mu(x).$$
Thus the product of the two conditional expectations is equal to $0$ a.e. As the sum
of these two functions is equal to $1$, each of them is equal to $0$ or $1$ a.e. It follows
that $\mathbb{E}_\mu(1_A |P_\mu(G)) = 1_A$ a.e., and $A\in P_\mu(G)$ (mod $\mu$). Thus the $\sigma$-algebras $\mathcal{B}_X$ and $P_\mu(G)$
 are equal up to null sets. This implies $h_\mu(G)=0$,  a contradiction.
\end{proof}

The next lemma establishes a connection among asymptotic pairs,
Pinsker $\sigma$-algebra,  and entropy.
\begin{Lemma}\label{lem} Let $G$ be a countable discrete infinite amenable group with the algebraic past $\Phi$,  $f_n\nearrow \infty$ w.r.t. $\Phi$ with $f_n\Phi f_n^{-1}=\Phi$ for each $n\ge 1$, and
$S$ be a infinite subset of $G$ such that $\sharp\{s\in S:s<_\Phi
f_n\}<\infty$ for each $n\ge 1$.  Also let  $(X,G)$ be
a $G$-system and $\mu\in \mathcal{M}(X,G)$.   Then
there exists a measurable partition $\mathcal{P}$ of $(X, G,\mu)$
such that  the following holds:
\begin{enumerate}
\item $\overline{\mathcal{P}_\Phi(x)}\subseteq W_{Sh}(x,G)$ for any $x\in X$ and $h\in \Phi^{-1}\cup \{e_G\}$, where
$\mathcal{P}_\Phi=\bigvee_{g\in \Phi}g\mathcal{P}$ and
$\mathcal{P}_\Phi(x)$ is the atom of $\mathcal{P}_\Phi$ containing
x.

\item $\bigcap_{h\in \Phi} h(\widehat{\mathcal{P}_\Phi}\vee P_\mu(G))=P_\mu(G)$.

\item If, in addition, $h_\mu(G)>0$, then $\widehat{\mathcal{P}_\Phi}\neq \mathcal{B}_X^\mu \, (mod \,
\mu)$.
\end{enumerate}
\end{Lemma}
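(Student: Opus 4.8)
The plan is to construct the partition $\mathcal{P}$ explicitly so that its atoms have small diameter and shrink appropriately, then verify the three asserted properties using the structure theory already developed. First I would fix a decreasing sequence of finite Borel partitions whose atoms have diameters tending to zero and which generate $\mathcal{B}_X^\mu$; more carefully, since the $S$-asymptotic condition in (1) must hold pointwise, I would build $\mathcal{P}$ as a countable join $\bigvee_m \eta_m$ of finite partitions $\eta_m$ chosen so that the $\mathcal{P}$-atom of each point, after being pushed around by elements of $\Phi$, collapses in diameter along the ordering. The key constraint to exploit is $\sharp\{s\in S: s<_\Phi f_n\}<\infty$ together with $f_n\nearrow\infty$: this says that the elements of $S$ are cofinal with respect to the $f_n$, so that controlling the partition on the initial segments $\{g: g<_\Phi f_n\}$ controls the behavior along all but finitely many $s\in S$.

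For property (1), the plan is to show $\overline{\mathcal{P}_\Phi(x)}\subseteq W_{Sh}(x,G)$ by a diameter estimate. Given $y\in\mathcal{P}_\Phi(x)$, the points $x$ and $y$ lie in the same atom of $g\mathcal{P}$ for every $g\in\Phi$, equivalently $gx$ and $gy$ lie in the same $\mathcal{P}$-atom; choosing the generating partitions so that the $\mathcal{P}$-atoms refine a sequence of covers of shrinking mesh, I would deduce that $d(gx,gy)$ is small for $g$ ranging over $\Phi$, uniformly once $g$ is large enough in the ordering. Since $s h<_\Phi e_G$ places $sh$ inside $\Phi$ for all but finitely many $s\in S$ when $h\in\Phi^{-1}\cup\{e_G\}$ (here the hypothesis $\sharp\{s\in S: s<_\Phi f_n\}<\infty$ guarantees only finitely many $s$ fail to be large enough), this yields $d((sh)x,(sh)y)\to 0$ along $S$, i.e.\ $(x,y)$ is an $Sh$-asymptotic pair. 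Taking closures is harmless because $W_{Sh}(x,G)$ is closed in the relevant sense once the estimate is uniform. For property (2), I would invoke the Pinsker formula, Theorem~\ref{lm1}, in the form $h_\mu(G,\alpha)=H_\mu(\alpha\mid\alpha_\Phi)$: the intersection $\bigcap_{h\in\Phi}h(\widehat{\mathcal{P}_\Phi}\vee P_\mu(G))$ is a $G$-invariant (indeed tail) $\sigma$-algebra on which the relative entropy vanishes, so it is contained in the Pinsker algebra; combined with the obvious reverse inclusion $P_\mu(G)\subseteq\bigcap_h h(\cdots)$ this gives equality.

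Property (3) is where I expect the real work. The claim is that when $h_\mu(G)>0$ the past algebra $\widehat{\mathcal{P}_\Phi}$ is strictly smaller than $\mathcal{B}_X^\mu$. The plan is to argue by contradiction: if $\widehat{\mathcal{P}_\Phi}=\mathcal{B}_X^\mu$ (mod $\mu$), then $\mathcal{P}$ is a one-sided generator for the past, and by the Pinsker formula $h_\mu(G,\mathcal{P})=H_\mu(\mathcal{P}\mid\mathcal{P}_\Phi)=H_\mu(\mathcal{P}\mid\mathcal{B}_X^\mu)=0$; since $\mathcal{P}$ generates $\mathcal{B}_X^\mu$ this would force $h_\mu(G)=h_\mu(G,\mathcal{P})=0$, contradicting positivity. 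The main obstacle is that this argument only works if $\mathcal{P}$ was chosen to be a genuine generator of $\mathcal{B}_X^\mu$, which is in tension with the diameter-shrinking requirement of (1); reconciling these — producing a single countable partition that both separates points (hence generates) and has the asymptotic-collapse property along $\Phi$ — is the delicate point, and I would handle it by taking $\mathcal{P}=\bigvee_m\eta_m$ with $\mathrm{mesh}(\eta_m)\to 0$ so that $\widehat{\mathcal{P}}=\mathcal{B}_X^\mu$ automatically, while the $\Phi$-shift controls diameters through the uniform structure of the compact metric space.
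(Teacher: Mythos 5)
Your construction of $\mathcal{P}$ is where the proposal breaks down, and the failure is fatal for item (3). If you take $\mathcal{P}=\bigvee_m\eta_m$ with $\mathrm{mesh}(\eta_m)\to 0$, then the atoms of $\mathcal{P}$ are singletons and $\widehat{\mathcal{P}}=\mathcal{B}_X^\mu$; but then for any single $g\in\Phi$ one already has $\widehat{\mathcal{P}_\Phi}\supseteq g\widehat{\mathcal{P}}=\mathcal{B}_X^\mu$, so $\widehat{\mathcal{P}_\Phi}=\mathcal{B}_X^\mu$ \emph{always}, i.e.\ your partition proves the negation of (3). You correctly sense the tension between ``fine enough along $S$-orbits'' and ``not globally generating,'' but resolving it by letting the mesh go to zero picks the wrong side. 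The paper's resolution is to \emph{translate} the fine partitions into the future: it sets $P_j=\bigvee_{i=1}^j f_{k_i}^{-1}\alpha_i$ and $\mathcal{P}=\bigvee_{i\ge 1}P_i$, where $\mathrm{diam}(\alpha_i)\to 0$ but the indices $k_i\nearrow\infty$ are chosen \emph{inductively via Proposition \ref{prop3.1}} so that
$H_\mu\bigl(P_k\mid (P_{q-1})_\Phi\vee P_\mu(G)\bigr)-H_\mu\bigl(P_k\mid (P_q)_\Phi\vee P_\mu(G)\bigr)<\tfrac{1}{k2^{q-k}}$.
With this translation, $\mathcal{P}_\Phi$ contains $(sh)^{-1}\alpha_i=\bigl((sh)^{-1}f_{k_i}\bigr)\bigl(f_{k_i}^{-1}\alpha_i\bigr)$ only when $(sh)^{-1}f_{k_i}\in\Phi$, i.e.\ only for $sh>_\Phi f_{k_i}$; this still gives (1) (and note your phrasing ``$sh<_\Phi e_G$ places $sh$ inside $\Phi$'' is backwards --- the relevant elements of $S$ are eventually \emph{large}, hence in $\Phi^{-1}$), while leaving $\widehat{\mathcal{P}_\Phi}$ room to be a proper subalgebra.

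The second gap is that your argument for (2) assumes its conclusion. Saying the intersection is ``a tail $\sigma$-algebra on which the relative entropy vanishes, so it is contained in the Pinsker algebra'' is exactly the statement to be proved; for a generic measurable partition the remote past $\bigcap_{h\in\Phi}h(\widehat{\mathcal{P}_\Phi}\vee P_\mu(G))$ need not collapse to $P_\mu(G)$ (your mesh-zero partition is a counterexample). The paper proves (2) by taking $A$ in the intersection, $\xi=\{A,X\setminus A\}$, showing $\widehat{\xi_G}\subseteq\widehat{\mathcal{P}_\Phi}\vee P_\mu(G)$, and then using the Pinsker formula together with the telescoping sum of the increments above to get $H_\mu(\xi\mid\xi_\Phi\vee P_\mu(G))\le H_\mu(\xi\mid (P_i)_\Phi\vee P_\mu(G))+\tfrac1i$, which tends to $H_\mu(\xi\mid\mathcal{P}_\Phi\vee P_\mu(G))=0$ by the Martingale Theorem; this is where the summable entropy bound (hence Proposition \ref{prop3.1}) is indispensable. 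Once (2) is in hand, (3) is immediate (if $\widehat{\mathcal{P}_\Phi}=\mathcal{B}_X^\mu$ then the intersection in (2) equals $\mathcal{B}_X^\mu$, forcing $P_\mu(G)=\mathcal{B}_X^\mu$ and $h_\mu(G)=0$), with no need for the generator/Kolmogorov--Sinai argument you sketch.
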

\begin{proof} Denote the metric on $X$ by $d$. Let $\{\alpha_n\}_{n\ge 1}$ be an increasing
sequence of finite Borel partitions of $X$ such that
$\lim_{n\rightarrow \infty}\text{diam}(\alpha_n)=0$.
Applying  Proposition \ref{prop3.1} inductively, we can find a
sequence $k_1,k_2,\cdots$ such that $k_i\nearrow \infty$ and for
each $q\ge 2$,
\begin{align}\label{bound}
H_\mu(P_k|(P_{q-1})_\Phi\vee P_\mu(G))-H_\mu(P_k|(P_q)_\Phi\vee
P_\mu(G))<\frac{1}{k2^{q-k}},\ k=1,2,\cdots,q-1,
\end{align}
 where $P_j=\bigvee_{i=1}^j f_{k_i}^{-1}\alpha_i$.
 We want to show that the measurable partition
 $\mathcal{P}=:\bigvee_{i=1}^\infty P_i$ satisfies the properties (1)-(3) above.

(1) Let $x\in X$, $y\in \overline{\mathcal{P}_\Phi(x)}$ and $h\in \Phi^{-1}\cup \{e_G\}$.
For a given $\epsilon>0$, since $\lim_{n\rightarrow
\infty}\text{diam}(\alpha_n)=0$, there exists $i\in \mathbb{N}$ such
that $\text{diam}(\alpha_i)\le \epsilon$. Now for any $s\in S$ with
$s>_\Phi f_{k_i}$, one has $s^{-1}f_{k_i}\in \Phi$ and
hence $(sh)^{-1}f_{k_i}\in \Phi$ and $y\in \overline{\mathcal{P}_\Phi(x)}\subseteq \overline{\big(
(sh)^{-1}f_{k_i}(f_{k_i}^{-1}\alpha_i)\big)(x)}=\overline{\big(
(sh)^{-1}\alpha_i\big)(x)}$. Thus $shy\in \overline{\alpha_i(shx)}$.
 It follows that $d(shx,shy)\le \text{diam}(\alpha_i)\le
\epsilon$ for any $s\in S$ with $s>_\Phi f_{k_i}$. Combing this with
the fact that
 $\sharp\{s\in S:s<_\Phi f_{k_i}\}<\infty$, we know that there are only
 finitely many $s\in S$ with $d(shx,shy)>\epsilon$. Since
$\epsilon$ is arbitrary, $(x,y)$ is an $Sh$-asymptotic pair. Thus $y\in
W_{Sh}(x,G)$.

\medskip
(2) We note by the $G$-invariance of $P_\mu(G)$ that
$gP_\mu(G)=P_\mu(G)$ for all $g\in G$.  By
also noting that $\Phi\Phi\subset \Phi$,  we have
\begin{equation}\label{P1}
\widehat{\mathcal{P}_\Phi}\vee P_\mu(G)\supset \bigcup_{g\in
\Phi}g(\widehat{\mathcal{P}_\Phi}\vee P_\mu(G)) \supset\bigcap_{g\in
\Phi}g(\widehat{\mathcal{P}_\Phi}\vee P_\mu(G))\supseteq P_\mu(G).
\end{equation}
For any $A\in \bigcap_{h\in
\Phi}h(\widehat{\mathcal{P}_\Phi}\vee P_\mu(G))$,  we
let $\xi=\{ A,X\setminus A\}$. Given $g\in G$,  if $g\in \Phi\cup
\{e_G\}$, then $g\widehat{\xi}\subseteq
g(\widehat{\mathcal{P}_\Phi}\vee P_\mu(G))\subseteq
\widehat{\mathcal{P}_\Phi}\vee P_\mu(G)$. If $g\in \Phi^{-1}$, then
$g^{-1}\in \Phi$ and
$$g\widehat{\xi}\subseteq
gg^{-1}(\widehat{\mathcal{P}_\Phi}\vee P_\mu(G))=
\widehat{\mathcal{P}_\Phi}\vee P_\mu(G).$$ Hence
$g\widehat{\xi}\subseteq \widehat{\mathcal{P}_\Phi}\vee P_\mu(G)$
for any $g\in G$. Thus  $\widehat{\xi_G}=\bigvee_{g\in G}
g\widehat{\xi}\subseteq \widehat{\mathcal{P}_\Phi}\vee P_\mu(G)$.
Moreover,  by Theorem \ref{lm1} and \eqref{bound}, we
have
   \begin{align*}
   &\hskip0.5cm H_\mu(\xi|\xi_\Phi\vee P_\mu(G))\\
   &=H_\mu(\xi\vee P_i|\xi_\Phi\vee(P_{i})_\Phi\vee P_\mu(G))-H_\mu(P_i|(P_i)_\Phi\vee\xi_{G}\vee P_\mu(G))\\
   &\leq H_\mu(\xi| (P_{i})_\Phi\vee P_\mu(G))+\big( H_\mu( P_i|(P_{i})_\Phi\vee P_\mu(G))-H_\mu(P_i|\mathcal{P}_\Phi\vee P_\mu(G)\big)\\
   &=H_\mu(\xi| (P_{i})_\Phi\vee P_\mu(G))+\sum_{j=i}^\infty\big( H_\mu(
   P_i|(P_j)_\Phi\vee P_\mu(G))-H_\mu(P_i|(P_{j+1})_\Phi\vee P_\mu(G))\big)\\
   &\leq
   H_\mu(\xi|(P_{i})_\Phi\vee P_\mu(G))+\sum_{j=i}^{\infty}\frac{1}{i2^{j+1-i}} =H_\mu(\xi|(P_{i})_\Phi\vee P_\mu(G))+\frac{1}{i}.
   \end{align*}
Letting  $i\to \infty$ in the above yields that $$h_\mu(G,\xi)=H_\mu(\xi|\xi_\Phi\vee P_\mu(G))\leq
H_\mu(\xi|\mathcal{P}_\Phi\vee P_\mu(G))=0.$$ Hence $A\in P_\mu(G)$.
Since $A$ is arbitrary,
$$\bigcap_{h\in\Phi}h(\widehat{\mathcal{P}_\Phi}\vee P_\mu(G))\subset P_\mu(G).$$
 This, together with \eqref{P1}, proves (2).

(3) Suppose for contradiction that  $\widehat{\mathcal
{P}_\Phi}=\mathcal {B}_X^\mu \, (mod \ \mu)$.  Then
   $h\widehat{\mathcal{P}_\Phi} =\mathcal{B}_X^\mu \,(mod\, \mu)$ for any $h\in
   G$. Hence $\bigcap_{h\in\Phi}h(\widehat{P_\Phi
   }\vee P_\mu(G))=\mathcal{B}_X^\mu$. It follows from (2) that $P_\mu(G)=\mathcal{B}_X^\mu$,  and consequently, $h_\mu(G)=0$, a contradiction.
\end{proof}

\section{Proof of main results}

We first prove  Theorem \ref{thm1}.
\begin{proof}[Proof of Theorem \ref{thm1}] Let $(X,G)$ be a $G$-system and $\mu\in \mathcal{M}^e(X,G)$ with
$h_\mu(G)>0$. Then by Lemma \ref{lem} there exists a
measurable partition $\mathcal{P}$ of $(X,\mathcal{B}_X^\mu, G, \mu)$
such that
\begin{itemize}
 \item $\mathcal{P}_\Phi(x) \subseteq \bigcap_{h\in \Phi^{-1}\cup\{e_G\}} W_{Sh}(x,G)$ for any $x\in X$,
 and

 \item $\widehat{\mathcal{P}_\Phi}\neq \mathcal{B}_X^\mu \, (mod \, \mu)$.
\end{itemize}
  Let
$$E=\{(x,y)\in X\times X: (x,y) \text{ is }\text{an $Sh$-asymptotic pair for all }h\in \Phi^{-1}\cup\{e_G\}\}.$$
Then $E$ is a Borel subset of $X\times X$ with $gE\subset E$ for all $g\in \Phi^{-1}\cup\{e_G\}$. Let
$$J=\pi(E\setminus \Delta_X),$$
where $\pi:X\times X\rightarrow X$  denotes the
projection onto the first factor and $\Delta_X:=\{(x,x):x\in X\}$,
i.e., $J=\{x\in X: \bigcap_{h\in \Phi^{-1}\cup\{e_G\}}W_{Sh}(x,G)\setminus \{x\}\neq \emptyset\}$.  By a
theorem of Lusin that analytic sets are universally measurable (see e.g., \cite[Theorem 21.10]{CDS-K}), we have $J\in \mathcal{B}_X^\mu$. Note that $gJ\subseteq J$ for all $g\in \Phi^{-1}\cup\{e_G\}$, so
$\mu(gJ\Delta J)=0$ for $g\in \Phi^{-1}\cup\{e_G\}$ and hence $\mu(tJ\Delta J)=0$ for all $t\in G$ as $G=\Phi\cup \Phi^{-1}\cup\{e_G\}$.  By ergodicity of $\mu$, $\mu(J) = 0$ or
$1$.

If  $\mu(J)$ = 0, then $\bigcap_{h\in \Phi^{-1}\cup\{e_G\}}W_{Sh}(x,G) = \{x\}$ for $\mu$-a.e. $x\in X$.
Thus $\mathcal{P}_\Phi(x) = \{x\}$  for $\mu$-a.e. $x\in X$. This
implies that $\widehat{\mathcal{P}_\Phi}=\mathcal{B}_X^\mu
\, (mod \, \mu)$,  a
contradiction to  $\widehat{\mathcal{P}_\Phi}\neq \mathcal{B}_X^\mu \, (mod \, \mu)$. Hence $\mu(J)$ =1, which implies that $W_S(x,G)\setminus \{x\}\neq
\emptyset$ for $\mu$-a.e. $x\in X$.
\end{proof}

The proof of Theorem \ref{thm2}  will
need the following result due to Mycielski (see e.g. \cite[Lemma 5.9 and Theorem
5.10]{Ak} and \cite[Theorem 6.32]{AAG}).

\begin{Lemma}(Mycielski) \label{Myc} Let $Y$ be a perfect compact metric space and $C$ be a symmetric dense
$G_\delta$ subset of $Y\times Y$. Then there exists a dense subset
$K\subseteq Y$ which is  a union of countably many Cantor sets such
that $K\times K\subseteq C\cup \Delta_Y$, where $\Delta_Y=\{
(y,y):y\in Y\}$.
\end{Lemma}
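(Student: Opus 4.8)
The plan is to prove this via the classical Cantor-scheme construction underlying Mycielski's theorem. First I would write the symmetric dense $G_\delta$ set as $C=\bigcap_{n\ge1}U_n$ with each $U_n$ open and dense in $Y\times Y$; replacing $U_n$ by $U_1\cap\cdots\cap U_n$ and by $U_n\cap U_n^{-1}$ (where $U_n^{-1}=\{(y,x):(x,y)\in U_n\}$), I may assume the sequence $(U_n)$ is decreasing and each $U_n$ is symmetric. The goal is then to build a Cantor scheme of nonempty open sets $V_s$ indexed by finite binary strings $s$ so that the branch points form a Cantor set whose distinct pairs all land inside $C$.

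Concretely, I would construct open sets $\{V_s: s\in\{0,1\}^n,\ n\ge0\}$ satisfying: (i) $\overline{V_{s0}}\cup\overline{V_{s1}}\subseteq V_s$ with $\overline{V_{s0}}\cap\overline{V_{s1}}=\emptyset$; (ii) $\mathrm{diam}(V_s)\le 2^{-|s|}$; and the key condition (iii) that for any two distinct strings $s\ne t$ of the same length $n$, $\overline{V_s}\times\overline{V_t}\subseteq U_n$. The induction step is the heart of the matter: given the $V_s$ at level $n$, perfectness of $Y$ lets me first choose two disjoint nonempty open subsets with small closures inside each $V_s$, producing provisional sets at level $n+1$; then I enumerate the finitely many ordered pairs $(s',t')$ of distinct length-$(n+1)$ strings and treat them one at a time, using the density of $U_{n+1}$ to find a nonempty open box inside $V_{s'}\times V_{t'}\cap U_{n+1}$ and shrinking $V_{s'},V_{t'}$ to its two sides. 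Since shrinking only makes the sets smaller, the conditions already secured for earlier pairs persist, and after finitely many steps (iii) holds at level $n+1$.

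Setting $K_0=\bigcap_{n\ge0}\bigcup_{|s|=n}\overline{V_s}$, conditions (i)--(ii) make $K_0$ a Cantor set, with each point $x_\sigma=\bigcap_n\overline{V_{\sigma|n}}$ indexed by an infinite branch $\sigma$. If $\sigma\ne\tau$ split at level $m$, then for every $n\ge m$ their length-$n$ prefixes are distinct, so $(x_\sigma,x_\tau)\in U_n$ by (iii); as $(U_n)$ is decreasing this forces $(x_\sigma,x_\tau)\in\bigcap_nU_n=C$, whence $K_0\times K_0\subseteq C\cup\Delta_Y$.

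Finally, for density I would run this construction not once but over a countable base $\{W_j\}_{j\ge1}$ of $Y$, building in a single joint recursion countably many Cantor sets $K_j$ with $K_j\subseteq W_j$; the union $K=\bigcup_jK_j$ then meets every basic open set, hence is dense. The step requiring care — and the main obstacle — is to enforce the product condition for all cross-pairs coming from different trees $K_i,K_j$, not merely for pairs within a single tree. I would handle this by running one scheme whose level-$0$ nodes are a bookkeeping-controlled initial segment of the $W_j$'s and whose deeper nodes branch binary, so that at each finite level only finitely many nodes are active; condition (iii), imposed on all distinct equal-length nodes, again guarantees that any two distinct points of $K$ are separated at all sufficiently large levels and therefore form a pair in $C$. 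The symmetry reduction on $(U_n)$ ensures the conclusion holds for both orders, giving $K\times K\subseteq C\cup\Delta_Y$ as required.
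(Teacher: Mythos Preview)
The paper does not supply its own proof of this lemma; it is quoted as a known result of Mycielski, with references to Akin \cite{Ak} and Akin--Auslander--Glasner \cite{AAG}. Your sketch is the standard Cantor-scheme argument underlying Mycielski's theorem and is correct in outline: the reduction to a decreasing sequence of symmetric open dense sets, the inductive shrinking step handling finitely many pairs at each level, and the observation that shrinking preserves earlier inclusions are all sound. The density extension via a joint forest scheme rooted in a countable base --- adding one new root per stage so that only finitely many nodes are active at each level, and enforcing condition (iii) across all active nodes --- is exactly the right mechanism; since any two distinct branches (even from different trees) eventually occupy distinct nodes at every sufficiently large level, and the $U_n$ are decreasing, the pair lands in $C$. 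So your proposal stands on its own as a proof of a result the paper merely cites.
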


\begin{proof}[Proof of Theorem \ref{thm2}] Let $(X,G)$ be a
$G$-system and $\mu \in \mathcal{M}^e(X,G)$ with $h_\mu(G)>0$, where
$X$ is endowed  with the metric $d$. Denote
$\lambda=\mu\times_{P_\mu(G)}\mu$. Take $z_0\in supp(\mu)$. For any open neighborhood $A$ of $(z_0,z_0)$ in $X\times X$, there exists
an open neighborhood $B$ of $x$ in $X$ such that $B\times B\subseteq A$. As  $z_0\in supp(\mu)$, $\mu(B)>0$.
Moreover
\begin{align*}
\lambda(A)&\ge \lambda(B\times B)=\int_X \mathbb{E}_\mu
(1_B|P_\mu(G))(x) \mathbb{E}_\mu
(1_B|P_\mu(G))(x)d\mu(x)\\
&\ge \big(\int_X \mathbb{E}_\mu
(1_B|P_\mu(G))(x)d\mu(x)\big)^2=\mu(B)^2\\
&>0,
\end{align*}
this implies $(z_0,z_0)\in supp(\lambda)$.

By Lemma \ref{bl},
$\lambda\in \mathcal{M}^e(X\times X,G)$ and $supp(\lambda)\varsubsetneq\Delta_X$. Take $(x_0,y_0)\in supp(\lambda)$ such that
$d(x_0,y_0)>0$. For each $i\in\mathbb{N}$, let $U_i$
(resp. $V_i$) be the open ball centered
at $(x_0,y_0)$ (resp. $(z_0,z_0)$) and
of radius $\frac{1}{i}$. Since $S$ is a $\Phi$-admissible
semigroup and $G$ is an infinite torsion-free group, $S$ is an
infinite set. For  each $g\in G$, we define
$$U_{i,g}=\bigcup_{t\in S\setminus \{e_G\}} t g(U_i)\qquad\quad
 {\rm (}resp. \;V_{i,g}=\bigcup_{t\in S\setminus
\{e_G\}}t g(V_i){\rm)}.$$ Let $\delta:=\frac{1}{2}d(x_0,y_0)$ and
$$W=\bigcap_{i\geq 1}\Big( (\bigcap_{g\in
S}U_{i,g})\cap(\bigcap_{g\in S}V_{i,g}) \Big).$$

\medskip

\noindent{\bf Claim 1.}  Any $(x,y)\in W$ is a
$(S^{-1},\delta)$-Li-Yorke pair.

\medskip
Let $(x,y)\in W$ and set $E=\{ s\in
S:d(s^{-1}x,s^{-1}y)>\delta\}$. We  first show that $E$
is an infinite set. Take $r\in \mathbb{N}$ such that $d(u,v)>\delta$
for any $(u,v)\in U_{r}$. Take $g\in S$. Then $(x,y)\in U_{r,g}=\bigcup_{t\in S\setminus \{e_G\}} t g(U_i)$. Thus there exists $t\in S\setminus \{e_G\}$ such that $((tg)^{-1}x,(tg)^{-1}y)\in U_r$, and consequently, $tg\in E$. Hence $E$ is nonempty. Suppose for contradiction
that $E$ is not an infinite set.  Then, on one hand,
there exists $a\in E$ such that $ba^{-1}\in \Phi\cup \{e_G\}$ for
all $b\in E$. On the other hand,  since $(x,y)\in
U_{r,a}$, there exists a $t(a)\in S\setminus \{e_G\}$
such that $(x,y)\in t(a)aU_r$,  and consequently, $t(a)a\in E$.
 Now, $t(a)=(t(a)a)a^{-1}\in \Phi\cup \{e_G\}$, a
contradiction to the fact that $t(a)\in \Phi^{-1}$. This shows that
$E$ is an infinite set.

Next, consider the sets $S_i=\{ s\in S:
d(s^{-1}x,s^{-1}y)<\frac{2}{i}\}$, $i\in \mathbb{N}$. Since
$(x,y)\in \bigcap_{g\in S}V_{2i,g}$, each $S_i$ is a non-empty set.
If  there exists $s\in \bigcap_{i=1}^\infty S_i$, then
$s^{-1}x=s^{-1}y$, i.e., $x=y$, a contradiction. Hence
$\bigcap_{i=1}^\infty S_i=\emptyset$. This implies that each $S_i$
is an infinite set  because  $S_1\supseteq S_2\supseteq
S_3\cdots$. Now we take $s_1\in S_1$ and $s_i\in
S_i\setminus\{s_1,s_2,\cdots,s_{i-1}\}$ for $i\ge 2$. Let
$S'=\{s_1^{-1},s_2^{-1},\cdots\}$. Then $S'$ is an infinite subset
of $S^{-1}$ and $(x,y)$ is an $S'$-asymptotic pair. This
proves the claim. \medskip

Since $S$ is a $\Phi$-admissible semigroup, $G,S$
satisfy conditions of Lemma \ref{lem}. It follows that there exists
a measurable partition $\mathcal{P}$ of $(X, G,\mu)$ such that
\begin{itemize}
\item $\overline{\mathcal{P}_\Phi(x)}\subseteq W_S(x,G)$ for any $x\in X$, where
$\mathcal{P}_\Phi=\bigvee_{g\in \Phi}g\mathcal{P}$ and
$\mathcal{P}_\Phi(x)$ is the atom of $\mathcal{P}_\Phi$ containing
$x$, and

\item  $\bigcap_{g\in \Phi} g(\widehat{\mathcal{P}_\Phi}\vee P_\mu(G))=P_\mu(G)$.
\end{itemize}

Let $\mathcal{F}=\widehat{\mathcal{P}_\Phi}\vee P_\mu(G)$ and
$\lambda_0=\mu\times_{\mathcal{F}}\mu$.

\medskip
\noindent{\bf Claim 2:}  For every closed (resp. open)
subset $F$ (resp. $U$) of $X\times X$ with $h(F)\supseteq F$ (resp.
$h(U)\subseteq U$) for any $h\in S$, one has $\lambda(F)\geq
\lambda_0(F)$ (resp. $\lambda(U)\leq \lambda_0(U)$).
\medskip

  Since $S$ is a $\Phi$-admissible semigroup, there exists
$(g_n)_{n\ge 1}\subseteq S^{-1}$ such that  $g_i>_\Phi g_{i+1}$ for
each $i\ge 1$ and  for each element $g\in G$, $\#\{ i\in \mathbb{N}:
g_i>_\Phi g\}<+\infty$. Let $\mathcal{F}_n=g_n(\mathcal{F})$ for
$n\in \mathbb{N}$.

 As $\Phi\Phi\subseteq \Phi$,
 \begin{equation}\label{eq-sss-0}u\mathcal{F}\subseteq \mathcal{F}
  \end{equation}
 for any $u\in \Phi$. For any $g\in \Phi$,
 since $\#\{ i\in \mathbb{N}: g_i>_\Phi g\}<+\infty$, we can find $m\in \mathbb{N}$ such that $g>_\Phi g_m$, i.e.,
  $g^{-1}g_m \in \Phi$. Thus $g^{-1}g_m\mathcal{F}\subset \mathcal{F}$, or equivalently,  $\mathcal{F}_m\subset g\mathcal{F}$. Hence
\begin{equation}\label{eq-sss-1}
\bigcap_{n\ge 1}\mathcal{F}_n\subset g\mathcal{F}
\end{equation}
for any $g\in \Phi$.

Note that $g_n\in S^{-1}\subset \Phi\cup \{e_G\}$ and $g_n^{-1}g_{n+k}\in \Phi$ for $n,k\in \mathbb{N}$.  Hence
$(\mathcal{F}_n)_{n\ge 1}$ is a decreasing sequence of sub-$\sigma$-algebras of $\mathcal{F}$, and
$$\bigcap_{n\ge 1}\mathcal{F}_n=\bigcap_{g\in \Phi} g\mathcal{F}=P_\mu(G)$$
by \eqref{eq-sss-0} and \eqref{eq-sss-1}.

Let $\lambda_n=\mu\times_{\mathcal{F}_n}\mu$, $n\in \mathbb{N}$.
Then by \eqref{eq-cdeq},
\begin{equation}\label{eq-fn-1}
\lambda_n=g_n(\lambda_0), \; n\in \mathbb{N}.
\end{equation}
It follows from the Martingale Theorem
that, for any  bounded Borel measurable functions $f,h$ on $X$,
\begin{equation}\label{eq-ccc}
\begin{split}
\int_{X\times X} f(x)h(y) d\lambda_n(x,y)&= \int_X \mathbb{E}_\mu(f|\mathcal{F}_n)(x)\mathbb{E}_\mu(h|\mathcal{F}_n)(x)d\mu(x)\\
&\rightarrow \int_X \mathbb{E}_\mu(f|P_\mu(G))(x)\mathbb{E}_\mu(h|P_\mu(G))(x)d\mu(x) \\
&=\int_{X\times X} f(x)h(y) d\lambda(x,y).
\end{split}
\end{equation}
The family $\mathcal{H}$ of continuous functions $H$ on
$X\times X$ such that
\begin{equation}\label{H1}
\int_{X\times X} H(x, y) d\lambda_n(x,y)\rightarrow \int_{X\times X}
H(x, y) d\lambda(x,y)
\end{equation}
 is a closed subspace of
$C(X\times X)$. By \eqref{eq-ccc}, it contains
the set $\mathcal{H}_*$ of all linear combinations of
functions of the form $f(x)h(y)$ such that $f,g\in
C(X)$. Since $\mathcal{H}_*$ is dense in $C(X\times X)$, $\mathcal{H}=
C(X\times X)$, i.e., \eqref{H1} holds for all $H\in C(X\times X)$,
or equivalently,  $\lambda_n$ converges weakly to $\lambda$ as
$n\rightarrow\infty$.

Now let $F$ be a closed subset of $X\times X$ with
$hF\supseteq F$ for any $h\in S$. Then $g_n^{-1}(F)\supseteq F$ for
$n\in \mathbb{N}$. Since $F$ is closed and $\lambda_n\rightarrow
\lambda$ weakly, we have by \eqref{eq-fn-1} that
$$\lambda (F) \ge  \limsup_{n\rightarrow \infty}
\lambda_n(F)=\limsup_{n\rightarrow \infty} \lambda_0(g_n^{-1}(F))\ge
\lambda_0(F).
$$
This proves the claim because the case of an open
subset $U$ of $X\times X$ simply follows by setting $U=X\setminus
F$.

\medskip

\noindent{\bf Claim 3:} $\lambda_0(W)=1$.
\medskip

 Since $S$ is a semigroup and
$S\setminus \{e_G\}\subseteq \Phi^{-1}$, we have
$$S(S\setminus \{e_G\})\subset S\cap \big (
(\Phi^{-1}\cup\{e_G\})\Phi^{-1} \big)\subset S\cap
\Phi^{-1}=S\setminus \{e_G\},$$ i.e., $S(S\setminus \{e_G\})\subset
S\setminus \{e_G\}$. For given $g\in G$ and
$i\in\mathbb{N}$, by the construction of $U_{i,g}$ and $V_{i,g}$,
we know that
\begin{equation}\label{eq-cc-1}
h(U_{i,g})\subseteq U_{i,g} \text{ and } h(V_{i,g})\subseteq
V_{i,q},\qquad\; h\in S.
\end{equation}  Since $\lambda$ is $G$-invariant,
we have
$$\lambda(h(V_{i,g})\Delta V_{i,g})=0 \text{ and } \lambda(h(U_{i,g})\Delta U_{i,g})=0,\qquad
h\in S.$$  Let
$$G_*:=\{ h\in G:\lambda(h(V_{i,g})\Delta V_{i,g})=0 \text{ and } \lambda(h(U_{i,g})\Delta U_{i,g})=0\}.$$
Then $G_*$ is a subgroup of $G$ and $S\subset G_*$. It
follows from the fact  $<S>=G$ that $G_*=G$. Thus,
$$\lambda(h(V_{i,g})\Delta V_{i,g})=0 \text{ and }\lambda(h(U_{i,g})\Delta U_{i,g})=0,\qquad\;
h\in G.
$$  By the
ergodicity of $\lambda$ and noting that
$\lambda(U_{i,g})\ge \lambda(U_i)>0$ and $\lambda(U_{i,g})\ge
\lambda(V_i)>0$, we have
\begin{equation}\label{eq-cc-2}
\lambda(U_{i,g})=\lambda(V_{i,g})=1.
\end{equation}
Combining Claim 2 with \eqref{eq-cc-1} and
\eqref{eq-cc-2}, we have
$\lambda_0(U_{i,g})=\lambda_0(U_{i,g})=1$. Since $g,i$
are arbitrary, the claim follows.
\medskip

To proceed with the proof of the theorem, we let
$\mu=\int_X \mu_x d\mu(x)$ be the disintegration of $\mu$ over
$\mathcal{F}$. Then by Claim 3,
$$1=\lambda_0(W)=\int_X \mu_x\times\mu_x(W) d\mu(x).$$ Thus for $\mu$-a.e. $x\in X$,
\begin{align}\label{ae=1}
\mu_x\times \mu_x(W)=1.
\end{align}

Since $h_\mu(G)>0$, we have by Lemma
\ref{bl} 2) that $\lambda(\Delta_X)=0$. Note that
$h(\Delta_X)=\Delta_X$, $h\in S$. We then have by Claim
2 that $\lambda_0(\Delta_X)\le \lambda(\Delta_X)=0$. Thus,
$$\int_X \mu_x\times \mu_x(\Delta_X)d\mu(x)=\lambda_0(\Delta_X)=0,$$
i.e., for $\mu$-a.e.$x\in X$, $\mu_x\times
\mu_x(\Delta_X) =0$. This implies that $\mu_x$ is non-atomic  for
$\mu$-a.e. $x\in X$. Combing this with \eqref{ae=1}, we have that
for $\mu$-a.e. $x\in X$, $W\cap supp(\mu_x)\times supp(\mu_x)$ is  a
dense $G_\delta$ subset of $supp(\mu_x)\times supp(\mu_x)$ and
$supp(\mu_x)$ is a perfect set. By Lemma \ref{Myc} and
Claim 1, we have that for $\mu$-a.e. $x\in X$, $supp(\mu_x)$
contains a $(S^{-1},\delta)$-Li-Yorke set which is a union of
countably many Cantor sets.  But since
$\mu_x(\mathcal{P}_\Phi(x))=1$ for $\mu$-a.e.$x\in X$,
$supp(\mu_x)\subseteq \overline{\mathcal{P}_\Phi(x)}\subseteq
W_S(x,G)$ for $\mu$-a.e.$x\in X$. In other words, for
$\mu$-a.e. $x\in X$, $W_S(x,G)$ contains a
$(S^{-1},\delta)$-Li-Yorke set which is a union of countably many
Cantor sets. This  completes the proof.
\end{proof}


\end{document}